\theoremstyle{plain}
\newtheorem{theorem}[equation]{Theorem}
\newtheorem{lemma}[equation]{Lemma}
\newtheorem{proposition}[equation]{Proposition}
\newtheorem{corollary}[equation]{Corollary}
\theoremstyle{definition}
\newtheorem{definition}[equation]{Definition}
\newtheorem{example}[equation]{Example}
\newtheorem{remark}[equation]{Remark}
\DeclareMathOperator{\Ker}{Ker}
\DeclareMathOperator{\Imm}{Im}
\newcommand{\C}{\mathbb{C}}
\newcommand{\Z}{\mathbb{Z}}
\newcommand{\CC}{\mathcal{C}}
\newcommand{\wCC}{\widetilde{\mathcal{C}}}
\newcommand{\CO}{\mathcal{O}}
\newcommand{\CM}{\mathcal{M}}
\newcommand{\E}{\mathcal{E}}
\newcommand{\F}{\mathcal{F}}
\newcommand{\T}{\mathcal{T}}
\newcommand{\Om}{\Omega}
\newcommand{\ind}{{\rm ind}\,}
\newcommand{\indGSV}{{\rm ind}_{\rm GSV}\,}
\newcommand{\indhom}{{\rm ind}_{\rm hom}\,}
\newcommand{\Ch}{{\rm Ch}\,}
\title{Homological indices of collections of 1-forms.}
\author{E.~Gorsky} 
\address{Department of Mathematics, University of California at Davis}
\address{National Research University Higher School of Economics, Moscow, Russia}
\email{egorskiy\symbol{'100}math.ucdavis.edu}
\thanks{}
\author{S.M.~Gusein-Zade} 
\address{Moscow State University, Faculty of Mathematics and Mechanics}
\email{sabir\symbol{'100}mccme.ru}
\thanks{The work was supported by the grant 16-11-10018 of the Russian Science Foundation.}
\begin{document}

\maketitle

\begin{abstract}
Homological index of a holomorphic 1-form on a complex analytic variety with an isolated singular point
is an analogue of the usual index of a 1-form on a non-singular manifold. One can say that it corresponds
to the top Chern number of a manifold. We 
offer a definition of homological indices for collections of 1-forms on a
(purely dimensional) complex analytic variety with an isolated singular point corresponding to other
Chern numbers. We also define new invariants of germs of complex analytic
varieties with isolated singular points related to ``vanishing Chern numbers'' at them.
\end{abstract}

\section{Introduction.}
For an isolated singular point of a vector field or of a 1-form on a smooth manifold one has a well-known
integer invariant~-- the index. It can be defined for vector fields or 1-forms on a complex-analytic
manifold as well. The notions of the indices of isolated singular
points of a vector field or of a 1-form have several generalizations to singular (real or complex) analytic
varieties: see, e.g., \cite{EG-Survey}. In particular, there are defined (in somewhat different settings)
the so called GSV-index, the radial index and the Euler obstruction of a 1-form.
One of the generalizations for a 1-form on a complex analytic variety with an isolated
singular point at the origin is the so called homological index: \cite{EGS}.

The sum of indices of (isolated) singular points of a vector field or of a 1-form on a compact smooth
(differentiable) manifold without boundary is equal to the Euler characterictic of the manifold. The sum
of indices of (isolated) singular points of a complex-valued 1-form on an $n$-dimensional compact complex
analytic manifold $X^n$ is equal to $(-1)^n$ times the Euler characterictic of the manifold which coincides
with the top Chern number $\langle c_n(T^*X^n),[X^n]\rangle$ of the cotangent bundle. Thus one can say
that the indices of singular points of vector fields or of 1-forms on complex analytic varieties correspond
to the top Chern number.

Other Chern numbers correspond to indices of singular points of collections of 1-forms on varieties.
On an $n$-dimensional compact complex analytic manifold $X^n$ the Chern number
$\langle \prod_{i=1}^s c_{k_i}(T^*X^n),[X^n]\rangle$ with $\sum_{i=1}^s k_i=n$ is equal to
the sum of the (properly defined) indices of isolated singular points of a collection
$\{\omega_j^{(i)}\}$ of 1-forms ($i=1,\ldots, s$, $j=1,\ldots, n-k_i+1$): see, e.g., \cite{EG-BLMS-2005}.
Analogues of the GSV-index for collections of 1-forms
on an isolated complete intersection singularity were defined in \cite{EG-BLMS-2005}. If all the
forms in the collection are complex analytic, this index is expressed as the dimension of a certain algebra.
In \cite{EG-collections}, there was defined an analogue of the Euler obstruction for a collection
of 1-forms on a purely $n$-dimensional complex analytic variety called the Chern obstruction.

Here we offer a definition of homological indices for collections of 1-forms on a (purely dimensional)
complex analytic variety with an isolated singular point corresponding to Chern numbers different from
the top one. We also define new invariants of germs of complex analytic varieties with isolated singular
points related to ``vanishing Chern numbers'' at them.

\section*{Acknowledgments}
We are grateful to A.G. Alexandrov for useful discussions. 

\section{Homological index of a 1-form.}
For a germ of a holomorphic 1-form $\omega=\sum\limits_{i=1}^n A_i(\overline{z})dz_i$ with an isolated
singular point (zero) at the origin
in $\C^n$ its index $\ind(\omega; \C^n,0)$ is equal to the multiplicity of the map
${\mathcal{A}}=(A_1, \ldots, A_n):(\C^n,0)\to(\C^n,0)$ which, in turn, is equal to
$$
\dim_{\C}\left(\CO_{\C^n,0}/\langle A_1,\ldots, A_n\rangle\right)\,,
$$ where $\CO_{\C^n,0}$
is the ring of germs of holomorphic functions on $\C^n$ at the origin, $\langle A_1,\ldots, A_n\rangle$
is the ideal generated by the corresponding elements. This relation can be reformulated in the
following way. Let $\Omega^i_{\C^n,0}$ be the module of germs of (holomorphic) differential $i$-forms
on $(\C^n,0)$. Then
$$
\ind(\omega; \C^n,0)=\dim_{\C}\left(\Omega^n_{\C^n,0}/\omega\wedge\Omega^{n-1}_{\C^n,0}\right)\,.
$$
An analogue of this equation holds for a 1-form on an isolated complete intersection singularity (ICIS)
as well. Let $(X,0)\subset (\C^N, 0)$ be an ($n$-dimensional) isolated complete intersection singularity
defined by the equations $f_1=f_2=\ldots=f_{N-n}=0$, $f_i\in{\mathcal O}_{\C^N,0}$, and let
$\Omega^i_{X,0}=\Omega^i_{\C_N,0}/\langle f_i\Omega^i_{\C_N,0}, df_i\wedge\Omega^{i-1}_{\C_N,0}\rangle$
be the module of germs of differential $i$-forms on $(X,0)$.
For a 1-form (not necessarily holomorphic) with an isolated singular point at the origin its GSV-index
$\indGSV(\omega; X,0)$ was defined in \cite{EG-MMJ}.
If the 1-form $\omega$ is holomorphic, one has \cite{EGS}:
\begin{equation}\label{indGSV}
 \indGSV(\omega; X,0)=\dim_{\C}\left(\Omega^n_{X,0}/\omega\wedge\Omega^{n-1}_{X,0}\right).
\end{equation}
The usual index and the GSV-index possess
the following ``law of conservation of number''. If $\omega'$ is a small deformation of the 1-form
$\omega$, the sum of indices of the singular points of the 1-form $\omega'$ split from the origin
is equal to the index of the 1-form $\omega$ at the origin. 
This means that 
\begin{equation}
\label{conservation}
\indGSV(\omega; X,0) = \indGSV(\omega'; X,0)
 + \sum_{x} \indGSV(\omega';X,x),
\end{equation}
where the sum on the right hand side is over all singular points $x$ of the
1-form $\omega'$ in a small punctured neighbourhood of the origin $0$ in $X$.
(Let us recall that for all points $x$ from a punctured neighbourhood of the origin $0$ in $X$
the GSV-index $\indGSV(\omega';X,x)$ is actually the usual index
of  the 1-form $\omega'$ on the complex analytic manifold $X\setminus\{0\}$.)

This property does not hold in general for a 1-form on a germ of a complex analytic variety with
an isolated singular point at the origin if the index is defined by Equation~(\ref{indGSV}).
A way to bypass this problem is to consider the homological index of a 1-form \cite{EGS}.

Let $(X,0)\subset (\C^N, 0)$ be a germ of a complex analytic variety of pure
dimension $n$ with an isolated singular point at the origin and let
$\omega$ be a holomorphic 1-form on $(X,0)$ (that is the restriction to $(X,0)$
of a holomorphic 1-form on $(\C^N, 0)$) without singular points (zeroes) outside of the origin.
Let $\Omega^i_{X,0}$ be the module of germs of differential $i$-forms on $(X,0)$.
Let us consider the complex $(\Omega^{\bullet}_{X,0}, \wedge\omega)$: 
\begin{equation}\label{hom-complex}
0 \to \Omega^0_{X,0}
\to \Omega^1_{X,0} \to ... \to \Omega^n_{X,0} \to 0\,, 
\end{equation}
where the arrows are the exterior products by the 1-form $\omega$: $\wedge\omega$.
This complex has finite-dimensional (co)homology groups $H^i(\Omega^\bullet_{X,0},\wedge\omega)$.
(This follows from the fact that the corresponding complex of sheaves consists of coherent
sheaves and its cohomologies are concentrated at the origin.)

\begin{definition} (\cite{EGS})
The {\em homological index} of the 1-form $\omega$ on $(X,0)$ is defined by
\begin{equation}\label{eq1}
\indhom(\omega; X,0)
= \sum_{i=0}^n (-1)^{n-i} \dim_{\C} H^i(\Omega^\bullet_{X,0},\wedge\omega)\,.
\end{equation}
\end{definition}

 The homological index satisfies the law of conservation of number \cite{GG}.
 If $X$ is an ICIS, the homological index of a holomorphic 1-form coincides with its GSV-index \cite[Theorem 3.2]{EGS}.

 In \cite{GM} there was considered an equivariant (with respect to a finite group action) version of
 the homological index of a 1-form. It take values in the ring of representations of the group.
 It was shown that on a smooth manifold this index coincides with the reduction of the equivariant index
 with values in the Burnside ring of the group defined earlier.
 
\section{Indices of collections of forms.}
Let $k_i$, $i=1,\ldots, s$, be positive integers such that $\sum\limits_{i=1}^s k_i=n$.
We shall consider collections of 1-forms $\{\omega_j^{(i)}\}$, $i=1,\ldots, s$, $j=1,\ldots,n-k_i+1$,
on (purely) $n$-dimensional varieties or on germs of $n$-dimensional varieties. 
One can say that collections of this sort correspond to the Chern number
$\langle \prod_{i=1}^s c_{k_i},[\bullet]\rangle$ in the following sense.
Let $X$ be a (non-singular) compact complex manifold of dimension $n$ and let $\{\omega_j^{(i)}\}$,
$i=1,\ldots, s$, $j=1,\ldots,n-k_i+1$, be a collection of 1-forms on it (continuous, but not necessarily
holomorphic). 
\begin{definition}
A point $x\in X$ is called {\em a singular point} of the collection $\{\omega_j^{(i)}\}$
if for each $i$ the 1-forms $\omega_1^{(i)}$, \dots, $\omega_{n-k_i+1}^{(i)}$ at the point $x$ are
linearly dependent. 
\end{definition}
For an isolated singular point $x$ of a collection $\{\omega_j^{(i)}\}$ one can define
the notion of its index ${\rm ind}(\{\omega^{(i)}_j\}; X,x)$ (see a more general definition for a collection
of 1-forms on an ICIS below). 
If the collection $\{\omega_j^{(i)}\}$ has only isolated singular points on $X$,
the sum of their indices is equal to the characteristic number
$\langle \prod_{i=1}^s c_{k_i}(T^*V^n),[V^n]\rangle$.

For positive integers $N$ and $m$ with $N \geq m$, let ${\CM}(N,m)$ be the space of $N \times m$ matrices
with complex entries and let $D(N,m)$ be the subspace of ${\CM}(N,m)$ consisting of degenerate matrices,
that is of matrices of rank less than $m$.
(The subset $D(N,m)$ is an irreducible subvariety of  ${\CM}(N,m)$ of codimension $N-m+1$.)
For a sequence $\widehat{\bf m}=(m_1, \ldots , m_s)$ of positive integers, 
let ${\CM}_{N, \widehat{\bf m}}:= \prod_{i=1}^s {\CM}(N,N-m_i+1)$ and let $D_{N, \widehat{\bf m}}:=
\prod_{i=1}^s D(N,N-m_i+1)$. 
The variety $D_{N, \widehat{\bf m}}$ is irreducible of codimension $m=\sum_im_i$, and therefore its
complement $W_{N, \widehat{\bf m}}= {\CM}_{N, \widehat{\bf m}} \setminus D_{N, \widehat{\bf m}}$ is
$(2m-2)$-connected, $H_{2m-1}(W_{N, {\bf k}}) \cong \Z$, and there is a natural choice of a
generator of the latter group. This choice defines the degree (an integer) of
a map from an oriented manifold of dimension $2m-1$ to 
$W_{N, \widehat{\bf m}}$.

Let, as above, $(X,0)\subset (\C^N, 0)$ be an ($n$-dimensional) isolated complete intersection singularity
defined by the equations $f_1=f_2=\ldots=f_{N-n}=0$, $f_i\in{\mathcal O}_{\C^N,0}$.
Let $k_i$, $i=1,\ldots, s$, be positive integers such that $\sum\limits_{i=1}^s k_i=n$
and $\{\omega_j^{(i)}\}$, $i=1,\ldots, s$, $j=1,\ldots,n-k_i+1$, be a collection of 1-forms on $(X,0)$
(that is restrictions to $(X,0)$ of 1-forms on $(\C^N, 0)$) without singular points on $X$ outside the origin..
Let $U$ be a neighbourhood of the origin in $\C^n$ where all the functions
$f_r$ ($r=1, \ldots , N-n$) and the 1-forms $\omega_j^{(i)}$ are defined and such
that the restriction of the collection $\{\omega_j^{(i)} \}$ of 1-forms to $(X \cap
U) \setminus \{ 0\}$ has no singular points. Let $S_\delta \subset U$ be a
sufficiently small sphere around the origin which intersects $X$ transversally
and denote by $K=X \cap S_\delta$ the link of the ICIS $(X,0)$. 
Let $\widehat{\bf k}:= (k_1, \ldots , k_s)$ and let $\Psi_X$ be the map from $X \cap U$ to
${\CM}_{n,\widehat{\bf k}}$ which sends a point $x \in X \cap U$ to the collection of
$N \times (N-k_i+1)$-matrices 
$$
\{(df_1(x), \ldots , df_{N-n}(x), \omega_1^{(i)}(x), \ldots ,\omega_{n -k_i+1}^{(i)}(x)) \},
\quad i=1, \ldots, s.
$$
Its restriction $\psi_X$ to the link $K$ maps $K$ to $W_{N,\widehat{\bf k}}$. 
The following notion was introduced in \cite{EG-BLMS-2005}.

\begin{definition}
The {\em GSV index} $\indGSV(\{\omega^{(i)}_j\};X,0)$ of the
collection of 1-forms $\{ \omega^{(i)}_j \}$ on the ICIS $(X,0)$ is the degree of the mapping
$\psi_X : K \to W_{N, \widehat{\bf k}}$ or, equivalently, 
the intersection number $(\Imm{\Psi_X}\circ D_{N, \widehat{\bf k}})$.
\end{definition}

Assume now that all the 1-forms $\omega^{(i)}_j$ in the collection are complex analytic.
In this case one has the following (``algebraic'') formula for the index $\indGSV(\{\omega^{(i)}_j\};X,0)$.
Let $I_{X,\{ \omega_j^{(i)}\}}$ be the ideal in the ring
${\CO}_{\C^n,0}$ generated by the functions $f_1, \ldots , f_{N-n}$ and by the
$(N-k_i+1) \times (N-k_i+1)$ minors of all the matrices
$$
(df_1(x), \ldots , df_{N-n}(x), \omega_1^{(i)}(x), \ldots ,\omega_{n-k_i+1}^{(i)}(x))
$$
for all $i=1, \ldots, s$. Then one has (\cite{EG-BLMS-2005})
\begin{equation}
\label{def gsv collection}
{\indGSV}(\{ \omega^{(i)}_j \};X,0) = \dim_\C {\CO}_{\C^n,0}/I_{X,\{\omega_j^{(i)}\}}.
\end{equation}

Let $(X,0)\subset (\C^N,0)$ be an arbitrary germ of an analytic variety (not necessarily with
an isolated singularity) and let $\{\omega_j^{(i)}\}$, $i=1,\ldots, s$, $j=1,\ldots,n-k_i+1$,
be a collection of 1-forms on $(X,0)$ ($\omega_j^{(i)}$ is the restriction of a 1-form on $(\C^N,0)$
which will be denoted by $\omega_j^{(i)}$ as well.)

\begin{definition}
A point $x\in X$ is called a {\em special} point of the collection $\{\omega^{(i)}_j\}$ if there
exists a sequence $\{x_m\}$ of points from the non-singular part $X_{\rm reg}$ of the variety $X$
converging to $x$ such that the sequence $T_{x_m}X_{\rm reg}$ of the tangent spaces at the points $x_m$
has a limit $L$ as $m \to \infty$ (in the Grassmann manifold $G(n,N)$ of $n$-dimensional vector subspaces of $\C^N$)
and the restrictions of the 1-forms $\omega^{(i)}_1$, \dots, $\omega^{(i)}_{n-k_i+1}$ to the subspace
$L\subset T_P\C^N$ are linearly dependent for each $i=1, \ldots, s$.
\end{definition}

The collection $\{\omega^{(i)}_j\}$ of 1-forms has an {\em isolated special point} on the germ $(X,0)$
if it has no special points on $X$ in a punctured neighbourhood of the origin.
(The condition for a special point of a collection of holomorphic 1-forms to be non-isolated is a condition
of infinite codimension.) For a collection of 1-forms on $(X,0)$ with an isolated special point at the
origin there is defined the local Chern obstruction $\Ch(\{\omega_j^{(i)}\};X,0)$: \cite{EG-collections}.
It is defined in terms of the Nash transform $\widehat{X}$ of the variety $X$:
the closure in $\C^N\times G(n,N)$ of the set $\{(x, T_xX_{\rm reg})\}$ for all point $x$ from
the non-singular part $X_{\rm reg}$ of $X$.
Over the Nash transform $\widehat{X}$ one has the Nash bundle which extends
the tangent bundle over the non-singular part of $X$. The forms $\omega_j^{(i)}$ define sections
of the dual bundle. The Chern obstruction $\Ch(\{\omega_j^{(i)}\};X,0)$ is the primary (and in fact the only)
obstruction to extend these sections from the preimage of the intersection of a small sphere around
the origin with $X$ to the Nash transform $\widehat{X}$ so that 
there are no points where
for each $i=1,\ldots, s$ the
extensions of the forms $\omega_j^{(i)}$, $j=1,\ldots, n-k_i+1$, are 
linear dependent.
For a generic collection $\{\omega_j^{(i)}\}$ (in particular, for a collection consisting of 
the differentials of a generic collection of linear functions on $\C^N$) the Chern obstruction is
equal to zero. If $(X,0)$ is non-singular, the Chern obstruction coincides with the (usual) index
of the collection of 1-forms.


\section{Homological index for  a collection of forms}

Let $(X,0)$ be a germ of an algebraic variety of dimension $n$ with an isolated singular point at the origin.
As above, let $k_i$, $i=1,\ldots, s$, be positive integers such that $\sum\limits_{i=1}^s k_i=n$ and
let $\{\omega_j^{(i)}\}$, $i=1,\ldots, s$, $j=1,\ldots,n-k_i+1$, be a collection of 1-forms on $(X,0)$.

Let $W_i=\C^{n-k_i+1}$ be an auxiliary vector space with a basis $u_1,\ldots,u_{n-k_i+1}$.  
We define a chain complex $\CC^{(i)}=\CC(\omega_1^{(i)},\ldots,\omega_{n-k_i+1}^{(i)})$ of sheaves of 
$\CO_{X,0}$-modules as following:
\begin{equation}
\label{main c}
\CC^{(i)}_0=\Omega^n_{X,0},\ \CC^{(i)}_t=\Omega^{k_i-t}_{X,0}\otimes S^{t-1}W_i,\ 1\le t\le k_i.
\end{equation}
The differential $d_t:\CC^{(i)}_{t}\to \CC^{(i)}_{t-1}$ is defined by the equations:
\begin{eqnarray*}
\label{main d}
d_1(\beta)&=&\beta\wedge \omega_1^{(i)}\wedge\ldots\wedge \omega_{n-k_i+1}^{(i)},\\
d_t(\beta\otimes \varphi(u))&=&\sum_{l=1}^{n-k_i+1}(\beta\wedge \omega_l^{(i)})\otimes \frac{\partial \varphi}{\partial u_l},\ 2\le t\le k_i.
\end{eqnarray*}
\begin{lemma}
One has $d^2=0$, so $(\CC^{(i)},d)$ is a chain complex.
\end{lemma}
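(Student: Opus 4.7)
The plan is a direct calculation: one verifies $d_{t}\circ d_{t+1}=0$ for every $t=1,\dots,k_i-1$. Since the formula for $d_{1}$ has a different shape from the formula for $d_{t}$ with $t\geq 2$ (its target $\Omega^{n}_{X,0}$ is not a tensor product), I would split the verification into the generic case $t\geq 2$ and the boundary case $t=1$.

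For the generic case $t\geq 2$, take a local section $\beta\otimes\varphi\in\CC^{(i)}_{t+1}=\Omega^{k_i-t-1}_{X,0}\otimes S^{t}W_i$ and apply the defining formula twice to obtain
\[
d_{t}d_{t+1}(\beta\otimes\varphi)=\sum_{l,m=1}^{n-k_i+1}\bigl(\beta\wedge\omega^{(i)}_{l}\wedge\omega^{(i)}_{m}\bigr)\otimes\frac{\partial^{2}\varphi}{\partial u_{m}\partial u_{l}}.
\]
Under the swap $l\leftrightarrow m$ the first tensor factor changes sign (the exterior product of two 1-forms is anticommutative), while the second factor is invariant (equality of mixed partial derivatives for the polynomial $\varphi$). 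Hence the double sum equals its own negative and vanishes, exactly as in the standard proof that a Koszul differential squares to zero.

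For the boundary case $t=1$, take $\beta\otimes\varphi\in\CC^{(i)}_{2}=\Omega^{k_i-2}_{X,0}\otimes S^{1}W_i$. A direct computation gives
\[
d_{1}d_{2}(\beta\otimes\varphi)=\sum_{l=1}^{n-k_i+1}\frac{\partial\varphi}{\partial u_{l}}\cdot\beta\wedge\omega^{(i)}_{l}\wedge\omega^{(i)}_{1}\wedge\cdots\wedge\omega^{(i)}_{n-k_i+1},
\]
and every summand vanishes because the 1-form $\omega^{(i)}_{l}$ occurs twice in the wedge product on the right. I do not expect any real obstacle in this lemma; the whole statement is a bookkeeping check, and the only subtlety worth flagging is that the level $t=1$ must be treated separately, since there the single wedge factor $\omega^{(i)}_l$ produced by $d_2$ is absorbed into the full $(n-k_i+1)$-fold wedge produced by $d_1$ rather than being paired with a second factor $\omega^{(i)}_m$ as in all higher degrees.
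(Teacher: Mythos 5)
Your proof is correct and follows essentially the same route as the paper: the composition $d_td_{t+1}$ for $t\geq 2$ vanishes by pairing the antisymmetry of $\omega^{(i)}_{l}\wedge\omega^{(i)}_{m}$ against the symmetry of the mixed partials of $\varphi$, and the boundary composition $d_1d_2$ vanishes because the factor $\omega^{(i)}_l$ is repeated inside the full wedge $\omega^{(i)}_1\wedge\cdots\wedge\omega^{(i)}_{n-k_i+1}$. This matches the paper's two-case computation exactly.
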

\begin{proof}
One has 
$$
d_0d_1(\beta\otimes u_l)=(\beta\wedge \omega_l^{(i)})\wedge \omega_1^{(i)}\wedge\ldots\wedge \omega_{n-k_i+1}^{(i)}=0, 
$$
and
$$
d_td_{t+1}(\beta\otimes \varphi(u))=
\sum_{l,l'}(\beta\wedge \omega_l^{(i)}\wedge \omega_{l'}^{(i)})\otimes
\frac{\partial^2 \varphi}{\partial u_l\partial u_{l'}}=0,\ t>0.
$$
\end{proof}

We can also define $\CC^{(i)}$ (at least its part of positive degree) using the notion of the {\em exterior power} of a chain complex. Recall that if $\E$ and $\F$ are two chain complexes of modules over a commutative ring, then $\E\otimes_{R}\F\simeq \F\otimes_{R}\E$, and the isomorphism is given by $a\otimes b\mapsto (-1)^{\deg(a)\cdot \deg(b)}b\otimes a$. Using this isomorphism, one can define the action of the symmetric group $S_k$ on $\E^{\otimes k}$, and define $\wedge^k(\E)$ as the sign component for this action. One can check  that the exterior powers of a two-term complex have the form:
\begin{equation}
\label{exterior two term}
\wedge^k\left[A\leftarrow B\right]\simeq \left[\wedge^kA\leftarrow \wedge^{k-1}A\otimes B\leftarrow \cdots\leftarrow A\otimes S^{k-1} B\leftarrow S^kB\right].
\end{equation}
Here we assume that $A$ has homological degree 0 and $B$ has homological degree 1.
\begin{proposition}
One has $\CC^{(i)}_{>0}\simeq \wedge^{k_i-1}\left[\Om^1_{X,0}\leftarrow \CO_{X,0}\otimes W_i\right]$, where the differential in the two-term complex 
$\Om^1_{X,0}\leftarrow \CO_{X,0}\otimes W_i$ over $\CO_{X,0}$ sends $u_i$ to $\omega_i$.
\end{proposition}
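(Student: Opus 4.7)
The plan is to verify the claimed isomorphism by unpacking the general formula~(\ref{exterior two term}) for the exterior power of a two-term complex and matching both the modules and the differentials on the two sides, up to a degree shift by one. The reason that only $\CC^{(i)}_{>0}$ appears in the statement is that the exterior power naturally lives in degrees $0,1,\ldots,k_i-1$, whereas the part of $\CC^{(i)}$ to be matched sits in degrees $1,\ldots,k_i$; the piece $\CC^{(i)}_0=\Omega^n_{X,0}$ and the map $d_1$ (which involves a wedge with all of the $\omega^{(i)}_j$ simultaneously) have no counterpart on the right-hand side and are excluded.

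First, I would match the terms. Put $A=\Om^1_{X,0}$ in homological degree $0$ and $B=\CO_{X,0}\otimes W_i$ in homological degree $1$, with differential $\delta:B\to A$, $\delta(u_l)=\omega^{(i)}_l$. Formula~(\ref{exterior two term}) with $k=k_i-1$ gives, in homological degree $p$ (where $0\le p\le k_i-1$), the module $\wedge^{k_i-1-p}A\otimes S^p B$. Using $\wedge^{q}\Om^1_{X,0}=\Om^q_{X,0}$ over $\CO_{X,0}$ and $S^p(\CO_{X,0}\otimes W_i)=\CO_{X,0}\otimes S^p W_i$, this becomes $\Om^{k_i-1-p}_{X,0}\otimes S^p W_i$. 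After reindexing $t=p+1$, this is exactly $\Om^{k_i-t}_{X,0}\otimes S^{t-1}W_i=\CC^{(i)}_t$ for $1\le t\le k_i$.

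Next, I would match the differentials. The Koszul-type differential on $\wedge^{k_i-1}[A\leftarrow B]$ induced from $\delta$ acts on a decomposable element $\alpha\otimes\varphi(u)\in\wedge^{k_i-1-p}A\otimes S^p B$ by
\[
\alpha\otimes\varphi(u)\ \longmapsto\ \sum_{l=1}^{n-k_i+1}\bigl(\alpha\wedge\delta(u_l)\bigr)\otimes\frac{\partial\varphi}{\partial u_l}.
\]
Substituting $\delta(u_l)=\omega^{(i)}_l$ and applying the identification of terms from Step~1, this reproduces verbatim the formula for $d_t$ with $t=p+1\ge 2$ given in the definition of $\CC^{(i)}$. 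So the two complexes have identical modules and identical structure maps in degrees $\ge 1$.

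The main technical obstacle I expect is justifying, without ambiguity, that the Koszul-style formula displayed above is indeed the correct differential on the exterior power of a two-term complex after one carries through the sign rule $a\otimes b\mapsto(-1)^{\deg(a)\deg(b)}b\otimes a$ in the definition of $\wedge^{k_i-1}\E$. The cleanest way to handle this is to pick the normal form in which all $A$-factors are wedged on the left and all $B$-factors are symmetrised on the right, as in~(\ref{exterior two term}); then the induced differential is obtained from $\delta$ by summing over which $B$-factor is differentiated, and the graded antisymmetry converts the alternating extraction into the partial-derivative form above. No further signs survive once the $A$-part is placed on the left, which is why the formulas on the two sides line up exactly rather than merely up to sign.
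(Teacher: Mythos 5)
Your proposal is correct and is essentially the paper's own argument: the paper proves the proposition by citing formula~(\ref{exterior two term}) together with the identification $\wedge^q(\Om^1_{X,0})=\Om^q_{X,0}$ over $\CO_{X,0}$, which is exactly what you do, just with the term-by-term and differential-by-differential matching (and the sign bookkeeping in the normal form $\wedge^{k_i-1-p}A\otimes S^pB$) written out explicitly.
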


\begin{proof}
Follows from \eqref{exterior two term} and the fact that $\wedge^i(\Om^1_{X,0})=\Om^i_{X,0}$ (over  $\CO_{X,0}$). 
\end{proof}

\begin{lemma}
\label{lem: support}
The cohomologies of $\CC^{(i)}$ are supported on the subvariety
$Z(X;\omega_1^{(i)},\ldots,\omega_{n-k_i+1}^{(i)})$ consisting of the point $x\in X$ 
where the forms $\{\omega_j^{(i)}\}$ are linearly dependent.
\end{lemma}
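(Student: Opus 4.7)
The plan is to localize and show that at any smooth point $x \in X$ where $\omega_1^{(i)},\dots,\omega_{n-k_i+1}^{(i)}$ are linearly independent, the stalk of $\CC^{(i)}$ is acyclic; since the cohomologies of $\CC^{(i)}$ are coherent $\CO_X$-modules and $X$ is smooth outside the origin, this implies their supports are contained in $Z(X;\omega_1^{(i)},\dots,\omega_{n-k_i+1}^{(i)})$.

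The key step uses the description from the preceding Proposition: $\CC^{(i)}_{>0}\simeq \wedge^{k_i-1}\bigl[\Om^1_{X,0}\leftarrow \CO_{X,0}\otimes W_i\bigr]$, where the two-term differential sends $u_l$ to $\omega_l^{(i)}$. At a smooth $x$ where the forms are linearly independent, $\Om^1_{X,x}$ is free of rank $n$ over $\CO_{X,x}$, and by Nakayama's lemma $\omega_1^{(i)},\dots,\omega_{n-k_i+1}^{(i)}$ extends to a local basis $\omega_1^{(i)},\dots,\omega_{n-k_i+1}^{(i)},\eta_1,\dots,\eta_{k_i-1}$. Relative to this basis the two-term complex splits off the acyclic summand whose differential identifies $\CO_{X,x}\otimes W_i$ with $\bigoplus_l\CO_{X,x}\omega_l^{(i)}$, and what remains is the free module $\bigoplus_r\CO_{X,x}\eta_r$ in degree $0$. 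Hence the two-term complex is quasi-isomorphic to a rank $(k_i-1)$ free $\CO_{X,x}$-module in degree $0$, and $\CC^{(i)}_{>0,x}$, being its $(k_i-1)$-th exterior power, is quasi-isomorphic to the rank-one free module $\CO_{X,x}\cdot(\eta_1\wedge\cdots\wedge\eta_{k_i-1})$ sitting in $\CC^{(i)}_{1}=\Om^{k_i-1}_{X,x}$.

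Finally, $d_1$ sends this surviving generator to $\eta_1\wedge\cdots\wedge\eta_{k_i-1}\wedge\omega_1^{(i)}\wedge\cdots\wedge\omega_{n-k_i+1}^{(i)}$, a basis element of the rank-one free module $\CC^{(i)}_0=\Om^n_{X,x}$. So $d_1$ induces an isomorphism on the last remaining cohomology, and the total complex $\CC^{(i)}_x$ is acyclic. The only delicate ingredient is the preservation of the quasi-isomorphism under exterior powers; this is standard for complexes of free modules, but can also be checked directly in the chosen basis by recognizing $\CC^{(i)}_x$ as the wedge product of an explicit acyclic Koszul-type complex in the $\omega_l^{(i)}$'s with the exterior algebra on the $\eta_r$'s.
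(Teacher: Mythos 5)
Your proof is correct and takes essentially the same route as the paper: localize at a smooth point of $X$ where the forms $\omega_j^{(i)}$ are linearly independent and verify that the stalk of $\CC^{(i)}$ is acyclic there. The paper does this by choosing coordinates so that $\omega_j^{(i)}=dx_j$ at the point and asserting that the model complex $\CC(dx_1,\ldots,dx_{n-k_i+1})$ is easily checked to be acyclic; your completion of the forms to a local frame of $\Om^1_{X,x}$, the splitting off of the contractible summand in the exterior-power description, and the check that $d_1$ becomes an isomorphism on the surviving rank-one piece is precisely a careful execution of that check.
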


\begin{proof}
Indeed, suppose that at some point $x\in X$ the forms $\omega_j^{(i)}$ are linearly independent,
in particular, neither of them vanishes. Since $X$ has an isolated singularity at the origin,
we can assume that $x$ is a smooth point of $X$. Then we can choose local coordinates at $x$ such that
at this point $\omega_j^{(i)}=dx_j$, and one can easily check that $\CC(dx_1,\ldots,dx_{n-k_i+1})$ is acyclic.
\end{proof}

Now we can define a complex 
$$
\CC=\CC(\{\omega_j^{(i)}\})=\bigotimes_{i=1}^{s}\CC^{(i)},
$$
where the tensor product is taken over $\CO_{X,0}$. Note that by construction the complex $\CC^{(i)}$
has length $k_i$, so the complex $\CC$ has length $\sum_{i=1}^{s}k_i=n$.

\begin{definition}
The homological index of the collection of 1-forms $\{\omega_j^{(i)}\}$ is defined as the
Euler characteristic of the complex $\CC$:
\begin{equation}
\label{def homological}
\indhom\left(\{\omega_j^{(i)}\}\right)=\sum_{t=0}^{n}(-1)^{t}\dim H^{t}(\CC).
\end{equation}
\end{definition}

By Lemma \ref{lem: support}, if the collection $\{\omega_j^{(i)}\}$ has an isolated singular point on $(X,0)$,
at each point of $X$ outside of the origin at least one of the complexes $\CC^{(i)}$
is acyclic and therefore the complex $\CC$ is acyclic as well. This means that the cohomologies
of $\CC$ are supported at the origin and therefore are finite-dimensional.
This implies that the homological index is well-defined.

\begin{example}
Suppose that $k_1=n$, that is the collection consists of a single 1-form $\omega=\omega_1^{(1)}$.
Then the complex $\CC=\CC^{(1)}$  agrees with \eqref{hom-complex}, and the definitions of the homological
index agree.
\end{example}

\begin{proposition}\label{prop:law_of_cons}
The homological index for a collection of 1-forms with an isolated singular point satisfies
the law of conservation of number $($like \eqref{conservation}$)$.
\end{proposition}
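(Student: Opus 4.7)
The plan is to realize the sum of homological indices over all isolated special points in a small neighborhood as the Euler characteristic of a hypercohomology on a Stein domain, and then to deduce the law of conservation from the invariance of this Euler characteristic under small holomorphic deformations of the collection.

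First I would choose a sufficiently small Stein neighborhood $U$ of $0$ in $X$ on which all the forms and their deformations are defined and outside of which no special point occurs. On such a $U$ the complex $\CC=\CC(\{\omega_j^{(i)}\})$ extends to a bounded complex of coherent $\CO_X$-modules; by Lemma~\ref{lem: support} the cohomology sheaves $\mathcal{H}^q(\CC)$ are supported on a finite subset of $U$. Since $U$ is Stein and these supports are $0$-dimensional, the hypercohomology spectral sequence degenerates at $E_2$ and yields
$$
\mathbb{H}^q(U,\CC)\cong\bigoplus_{x\in U}\mathcal{H}^q(\CC)_x,
$$
so that the alternating sum of $\dim\mathbb{H}^q(U,\CC)$ equals $\sum_{x\in U}\indhom(\{\omega_j^{(i)}\};X,x)$; the same identity holds for any deformation with isolated special points.

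Next I would connect the collection $\omega=\{\omega_j^{(i)}\}$ to its deformation $\omega'$ by a holomorphic family $\omega_t$ ($t\in\Delta$) whose special set remains in $U$ for every $t\in\Delta$, and assemble the complexes $\CC(\omega_t)$ into a single complex $\widetilde{\CC}$ of coherent sheaves on $U\times\Delta$ whose terms are the pullbacks of $\Omega^{\bullet}_{X,0}\otimes S^{\bullet}W_i$, hence $\Delta$-flat. The crux is then to prove that the Euler characteristic $\chi(\mathbb{H}^\bullet(U\times\{t\},\widetilde{\CC}|_{t}))$ is locally constant in $t$. Writing $\pi\colon U\times\Delta\to\Delta$ for the projection, the cohomology sheaves $\mathcal{H}^q(\widetilde{\CC})$ are coherent with support finite (hence proper) over $\Delta$; Grauert's coherence theorem combined with flatness of the terms of $\widetilde{\CC}$ then identifies $R\pi_*\widetilde{\CC}$ with a perfect complex on $\Delta$ whose fibre at $t$ computes $\mathbb{H}^\bullet(U\times\{t\},\widetilde{\CC}|_{t})$, and the Euler characteristic of a perfect complex is locally constant.

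The main technical obstacle is precisely this semicontinuity step, where $\pi$ itself is not proper and so one cannot invoke Grauert's theorem directly to $\pi$. I would circumvent this by restricting to a relatively compact open neighborhood of the (finite-over-$\Delta$) support of $\mathcal{H}^\bullet(\widetilde{\CC})$, reducing to a proper situation; an alternative is to mimic the single-form argument in \cite{GG}, which goes through for the present multi-collection complex $\CC=\bigotimes_i\CC^{(i)}$ essentially verbatim, since $\CC$ is again built from coherent sheaves with $\CO_X$-linear differentials depending holomorphically on the forms.
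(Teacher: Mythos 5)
Your proposal is correct and follows essentially the same route as the paper: the paper's entire proof is the observation that the statement is a direct consequence of the Giraldo--G\'omez-Mont law of conservation of number for local Euler characteristics \cite{GG}, which applies here because the terms of $\CC$ are fixed coherent sheaves with differentials depending holomorphically on the forms. Your sketch simply spells out the coherence/flatness/proper-support argument underlying that citation (and your closing remark that one may invoke \cite{GG} verbatim is exactly what the paper does).
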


\begin{proof}
This is a direct consequence of \cite{GG}.
\end{proof}

Since any collection of holomorphic 1-forms on $(\C^n,0)$ can be deformed to a one with non-degenerate
singular points (that is to a collection with singular points of index 1) and for a non-degenerate singular
point the homological index is equal $1$ as well, Proposition~\ref{prop:law_of_cons} implies that
on a non-singular manifold the homological index coincides with the usual one.
In the next section we shall show that on an isolated complete intersection singularity the 
homological index coincides with the GSV one.


\section{The case of complete intersections}


\begin{theorem}
Let $(X,0)$ be an isolated complete intersection singularity and let $\{\omega_j^{(i)}\}$ be a collection
of holomorphic 1-forms on $(X,0)$ with an isolated 
singular point. Then the homological index defined by \eqref{def homological} agrees with the GSV-index
defined by \eqref{def gsv collection}.
\end{theorem}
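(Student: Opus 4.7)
My plan is to combine the law of conservation of number with a direct identification of each complex $\CC^{(i)}$ as an Eagon--Northcott-type complex, and then match the Euler characteristic of $\CC$ with the algebraic formula \eqref{def gsv collection}. As a first step, by Proposition~\ref{prop:law_of_cons} and the analogous law of conservation of number for $\indGSV$ of collections (a consequence of the degree-theoretic definition via $\psi_X$), it suffices to verify the equality for a sufficiently generic holomorphic deformation $\{\omega_j^{(i)}(t)\}$ of the given collection. I would choose the deformation so that, for $0<|t|\ll 1$, every special point of the deformed collection lies in the smooth part $X_{\rm reg}$ and so that at each such point the ideal $J^{(i)}$ generated by the $(N-k_i+1)$-minors of the relevant matrix has the expected local codimension $k_i$ in $\CO_X$.

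The second step is to exploit the structural description given in Section~4. At a smooth singular point $x\in X_{\rm reg}$, $\Om^1_{X,x}$ is free of rank $n$, and the Proposition of Section~4 presents $\CC^{(i)}_{>0}$ as the exterior power $\wedge^{k_i-1}\bigl[\Om^1_{X,x}\leftarrow \CO_{X,x}\otimes W_i\bigr]$. Together with $\Om^n_{X,x}$ in degree~$0$, this is, up to the standard twist by $\Om^n_{X,x}$, exactly the Eagon--Northcott complex of the map $\phi^{(i)}\colon \CO_{X,x}^{n-k_i+1}\to \Om^1_{X,x}$ sending the basis of $W_i$ to $(\omega_1^{(i)},\ldots,\omega_{n-k_i+1}^{(i)})$. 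Under the expected-codimension hypothesis, the Eagon--Northcott theorem implies that $\CC^{(i)}$ is a free resolution of $\Om^n_{X,x}/J^{(i)}\Om^n_{X,x}$, with vanishing higher cohomology. Because $\sum_i k_i=n$ forces the joint vanishing locus of the ideals $J^{(i)}$ to be zero-dimensional at~$x$, a depth/codimension argument shows that the higher Tors in the tensor product $\CC=\bigotimes_i \CC^{(i)}$ vanish, and consequently
$$
\indhom(\{\omega_j^{(i)}(t)\}; X, x) = \dim_\C \CO_{X,x}\Bigl/\textstyle\sum_i J^{(i)},
$$
which coincides with \eqref{def gsv collection} applied locally at $x$.

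Summing over all singular points of the deformation and invoking conservation of number on both sides then yields $\indhom=\indGSV$ at the origin, provided the contribution at $0$ also agrees. For this it suffices to show that when $0$ is not a special point of the deformed collection, $\indhom(\{\omega_j^{(i)}(t)\}; X, 0)=0$. I would verify this by invoking Greuel's theorem that $\Om^q_{X,0}$ is Cohen--Macaulay of depth $n$ on an ICIS: combined with Lemma~\ref{lem: support}, this forces the complex $\CC$ at $0$ to be acyclic whenever the forms $\omega_j^{(i)}$ remain linearly independent in the limit at $0$.

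The main obstacle is the middle step, where one must identify $\CC^{(i)}$ with an honest Eagon--Northcott resolution and justify the vanishing of both the higher cohomology of $\CC^{(i)}$ and the higher Tors in $\bigotimes_i \CC^{(i)}$ on the (possibly singular) ICIS. The key technical ingredient is Greuel's Cohen--Macaulay theorem for $\Om^q_{X,0}$, which replaces the regularity of the ambient ring in the classical Eagon--Northcott setting. A secondary difficulty is ensuring that the expected-codimension condition can be achieved by a generic deformation; this is a transversality statement that I would handle by a standard dimension count in the parameter space of collections.
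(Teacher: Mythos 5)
Your reduction-by-deformation cannot work as stated, because the contribution of the origin never goes away and is never zero. On a singular ICIS the differentials $df_1(0),\ldots,df_{N-n}(0)$ span a subspace of dimension strictly less than $N-n$, so for \emph{every} $i$ the $N\times(N-k_i+1)$ matrix $(df_1,\ldots,df_{N-n},\omega_1^{(i)},\ldots,\omega_{n-k_i+1}^{(i)})$ is degenerate at $0$; hence the origin is a singular point, in the GSV sense relevant to \eqref{def gsv collection}, of \emph{any} collection, including every deformation $\{\omega_j^{(i)}(t)\}$. Its GSV index there is in general nonzero: for $s=1$ and a generic form it equals $\mu(X)+\mu'(X)$, as recalled in the last section of the paper. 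So after applying conservation of number on both sides, the equality you still have to prove at the origin for the deformed collection is exactly the original theorem --- no reduction has been achieved. Moreover, your proposed sufficient condition, that $\indhom(\{\omega_j^{(i)}(t)\};X,0)=0$ whenever $0$ is not a \emph{special} point, is false: since $\Ch(\{\omega_j^{(i)}\};X,0)=0$ for a generic collection while $\indhom-\Ch$ is independent of the collection, the homological index of a generic collection at the origin equals the (generally nonzero) invariant of the singularity. You are conflating ``special point'' (the Chern-obstruction notion, defined via restrictions to limiting tangent planes) with ``singular point'' in the GSV/algebraic sense (degeneracy of the matrices that include the $df_r$), and only the latter governs \eqref{def gsv collection}.

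The supporting technical claims for that last step also fail. Lemma \ref{lem: support} says nothing about the stalk at the origin (its proof reduces to smooth points of $X$), and ``$\Om^q_{X,0}$ is Cohen--Macaulay of depth $n$ on an ICIS'' is not Greuel's theorem and is false in general: by Lemma \ref{lem: resolution complete intersection}, $\Om^q_{X,0}$ has a free resolution of length $q$ over $\CO_{X,0}$, so its depth is typically $n-q$ (already $\Om^1_{X,0}$ on a singular ICIS curve has torsion, hence depth $0$). In addition, even acyclicity of each $\CC^{(i)}$ at the origin would not immediately give acyclicity of $\CC=\bigotimes_i\CC^{(i)}$, since the terms are not free $\CO_{X,0}$-modules there. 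By contrast, the paper makes no deformation of the forms: it replaces each $\Om^j_{X,0}$ by the resolution \eqref{resolution complete intersection}, identifies the resulting total complex $\wCC^{(i)}$ with an Eagon--Northcott complex of the combined matrix of the $df_r$ and the $\omega_j^{(i)}$, uses the isolated-singular-point hypothesis to force the degeneracy loci to have expected dimension, hence be Cohen--Macaulay, and obtains the vanishing of higher cohomology (Theorems \ref{vanishing one set} and \ref{th:vanishing}); then $H^0(\wCC)\simeq\CO_{X,0}/\sum_i I^{(i)}$ gives the GSV index, and the comparison $\chi(\CC)=\chi(\wCC)$ is made via the torsion modules $\T,\T'$ and Lemma \ref{lem tau}. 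The smooth-point part of your argument (identification with Eagon--Northcott complexes and Tor vanishing on $X_{\rm reg}$) is sound, but the origin carries the whole content of the theorem, and your plan does not address it.
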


The key role in the proof is played by the classical Eagon-Northcott complex \cite{EN,Eis},
and in the next subsection we remind its definition and properties.


\subsection{Eagon-Northcott complex}

The complex \eqref{main c} is very similar to the so-called Eagon-Northcott complex \cite{EN,Eis}
which we now review.
Let $R$ be a commutative ring, and let $M=(m_{ij})$ be an $s\times r$ matrix ($s\le r$) with entries in $R$.
For $1\le i_1<\ldots<i_s\le r$ we denote by $\Delta_{i_1,\ldots,i_s}(M)$ the corresponding $s\times s$ minor
of $M$.
As above, let $W$ be a $s$-dimensional space with basis $u_1,\ldots,u_s$, and let $V$ be an $r$-dimensional
space with basis $e_1,\ldots, e_r$. The complex $\E(M)$ has the chain groups
\begin{equation}
\label{EN groups}
\E_0=R,\ \E_j=R\otimes \wedge^{s+j-1}V\otimes S^{j-1}W,\ 1\le j\le r-s+1,
\end{equation}
and the differentials $d_j:\E_{j}\to \E_{j-1}$ are given by the equations
\begin{eqnarray*}
\label{EN differential}
d_1(e_{i_1}\wedge \ldots \wedge e_{i_s})&=&\Delta_{i_1,\ldots,i_s}(M),\\
d_j(e_{i_1}\wedge \ldots \wedge e_{i_{s+j-1}}\otimes \varphi(u))&=&
\sum_{l=1}^{s+j-1}\sum_{t=1}^{s}
(-1)^{l-1}m_{i_l,s}e_{i_1}\wedge \ldots \widehat{e_{i_l}}\ldots \wedge e_{i_{s+j-1}}\otimes
\frac{\partial \varphi}{\partial u_{t}}
\end{eqnarray*}
for 
$j > 1$. One can check that $d^2=0$.

\begin{theorem}$($\cite{EN}$)$
\label{th: EN}
Suppose that $R$ is Noetherian and the depth of the ideal $(\Delta_{i_1,\ldots,i_s}(M))$ equals $r-s+1$. 
Then $H^j(\E,d)=0$ for $j>0$ and 
$$
H^0(\E,d)=R/\langle\Delta_{i_1,\ldots,i_s}(M)\rangle.
$$
\end{theorem}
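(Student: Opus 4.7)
The plan is to apply the Buchsbaum--Eisenbud acyclicity criterion. Recall the criterion: a bounded complex
\[
0 \longrightarrow F_N \xrightarrow{d_N} F_{N-1} \longrightarrow \cdots \longrightarrow F_1 \xrightarrow{d_1} F_0
\]
of finitely generated free modules over a Noetherian ring $R$ is acyclic if and only if, setting $r_j = \sum_{i \ge j}(-1)^{i-j}\operatorname{rank} F_i$, one has $\operatorname{rank} F_j = r_j + r_{j+1}$ together with $\operatorname{depth} I_{r_j}(d_j) \ge j$ (or $I_{r_j}(d_j) = R$) for each $j = 1,\ldots,N$, where $I_{r_j}(d_j)$ is the ideal of $r_j \times r_j$ minors of the matrix of $d_j$. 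Here $N = r - s + 1$, the length of $\E$.

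First I would verify the rank condition by a combinatorial computation on $\operatorname{rank}\E_j = \binom{r}{s+j-1}\binom{s+j-2}{s-1}$, together with the observation that at a generic point the matrix of $d_j$ attains the expected rank $r_j$. Second I would prove the key identification $\sqrt{I_{r_j}(d_j)} = \sqrt{I_s(M)}$, where $I_s(M) = \langle\Delta_{i_1,\ldots,i_s}(M)\rangle$ is the ideal of maximal minors of $M$. One inclusion is easy: the entries of the matrix of $d_j$ are polynomials in the $m_{ij}$, so $I_{r_j}(d_j)$ vanishes set-theoretically on the locus where $M$ drops rank. For the other inclusion I would localize at a prime missing $I_s(M)$: there some $s \times s$ minor of $M$ is invertible, and after $R$-linear row and column operations $M$ takes the standard form with an $s \times s$ identity block, whereupon $(\E,d)$ becomes (a twist of) a Koszul complex on a regular sequence and $d_j$ visibly attains full expected rank, so $I_{r_j}(d_j)$ is locally the unit ideal. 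Granted this radical identification, the hypothesis $\operatorname{depth} I_s(M) = r - s + 1 \ge j$ supplies the Buchsbaum--Eisenbud depth bound, acyclicity in positive degrees follows, and the computation $H^0(\E,d) = R / \operatorname{Im} d_1 = R/\langle\Delta_{i_1,\ldots,i_s}(M)\rangle$ is immediate from the formula for $d_1$.

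The main obstacle is the radical identification of the Fitting ideals $I_{r_j}(d_j)$ with $I_s(M)$ for the intermediate $j$, which requires careful bookkeeping with the multilinear structure of the EN differentials. An elegant alternative that sidesteps most of this is to first prove the theorem in the universal case $R = k[x_{ij}]$, $M = (x_{ij})$ --- where $\operatorname{depth} I_s(M) = r - s + 1$ is Macaulay's classical height formula, and acyclicity can be established by induction on $s$ via a mapping cone relating $\E(M)$ to the EN complex of the matrix with one row deleted --- and then to deduce the general case by specialization along $k[x_{ij}] \to R$, $x_{ij} \mapsto m_{ij}$: the depth hypothesis ensures that this map preserves the grade of the determinantal ideal, and for perfect cyclic quotients such grade-preserving specializations carry free resolutions to free resolutions.
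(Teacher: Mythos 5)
The paper offers no proof of this statement: it is quoted from Eagon--Northcott \cite{EN} and used as a black box (the relevant modern treatment is the appendix on the Eagon--Northcott family in \cite{Eis}), so there is no internal argument to compare yours against. What you propose is essentially that standard modern proof via the Buchsbaum--Eisenbud acyclicity criterion, and it is sound in outline: the rank condition follows once the complex is known to be split exact at a generic point, and the depth condition reduces to the single hypothesis on $I_s(M)=\langle\Delta_{i_1,\ldots,i_s}(M)\rangle$ once one knows that $\E(M)$ becomes split exact after localizing at any prime not containing $I_s(M)$; your alternative route (universal matrix plus specialization, i.e.\ generic perfection, with the row-deletion induction) is also standard and is closer in spirit to the original 1962 argument. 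Two points deserve repair. First, your justification of the ``easy inclusion'' $I_{r_j}(d_j)\subseteq\sqrt{I_s(M)}$ --- that the entries of $d_j$ are polynomials in the $m_{ij}$ --- proves nothing as stated; fortunately that inclusion is never needed. For the criterion you only need $\operatorname{depth} I_{r_j}(d_j)\ge j$, and this follows from the reverse containment $V(I_{r_j}(d_j))\subseteq V(I_s(M))$ (which is exactly what your localization argument gives, since local split exactness forces $I_{r_j}(d_j)$ to be the unit ideal there), together with the facts that depth of an ideal depends only on its radical and that $\operatorname{depth} I_s(M)=r-s+1\ge j$ for all $j$ occurring. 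Second, after reducing $M$ to the form $(I_s\mid 0)$ the complex $\E$ is \emph{not} a twist of a Koszul complex once $s>1$; the correct (and sufficient) statement is that the Eagon--Northcott complex of a split surjection is split exact, which is an explicit but slightly fiddly computation that should be carried out rather than asserted. With those two sentences fixed, the proof is complete.
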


\begin{corollary}
If $R$ and $R/\langle\Delta_{i_1,\ldots,i_s}(M)\rangle$ are Cohen-Macaulay of dimensions $N$ and $N-r+s-1$
respectively, then $H^j(\E,d)=0$ for $j>0$.
\end{corollary}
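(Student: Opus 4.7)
The plan is to reduce this corollary directly to Theorem~\ref{th: EN}: all that needs to be checked is that the two Cohen--Macaulay hypotheses force the grade/depth condition on the ideal of maximal minors
$I := \langle \Delta_{i_1,\ldots,i_s}(M)\rangle$.

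First I would read off the height of $I$ from the given dimensions. Because $R$ is Cohen--Macaulay of dimension $N$ and $R/I$ is Cohen--Macaulay of dimension $N-r+s-1$, we have
$$ \mathrm{ht}(I) \;=\; \dim R - \dim R/I \;=\; r - s + 1, $$
using that a Cohen--Macaulay ring is catenary and that the CM property of $R/I$ rules out embedded components, so codimension is truly complementary to the dimension of the quotient.

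Next I would upgrade this height equality to a statement about grade. The standard fact here is that in a Noetherian Cohen--Macaulay ring the grade of any proper ideal equals its height; equivalently, $I$ contains a regular sequence of length $\mathrm{ht}(I)$. (One proof: localize at a minimal prime of $I$ of maximal height; the localization is still Cohen--Macaulay, so a system of parameters for the quotient provides an $R$-regular sequence of the required length, which can then be realized globally by prime avoidance.) Thus $\mathrm{depth}(I) = r - s + 1$, and since Cohen--Macaulay rings are Noetherian by hypothesis, every assumption of Theorem~\ref{th: EN} is satisfied. Applying that theorem yields $H^j(\E,d) = 0$ for all $j > 0$, as required.

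The whole argument is essentially a bookkeeping reduction to Theorem~\ref{th: EN}; the only step containing real content is the height-equals-grade identity, which is precisely why the Cohen--Macaulay hypothesis on $R$ (not merely on $R/I$) is invoked. I do not expect any serious obstacle: the potential pitfall is only to be careful that ``depth of an ideal'' in the statement of Theorem~\ref{th: EN} means grade (maximal length of a regular sequence in $I$), and to note explicitly that this is what the Cohen--Macaulay assumption converts the dimension count into.
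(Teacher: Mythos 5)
Your reduction is exactly the intended one: the paper states this as an immediate consequence of Theorem~\ref{th: EN}, the point being that Cohen--Macaulayness makes the grade (depth) of the ideal of maximal minors equal to its height $\dim R-\dim R/I=r-s+1$, which is the hypothesis of that theorem. Your write-up is correct (the remark about embedded components is harmless but not needed, since in a Cohen--Macaulay local ring $\mathrm{ht}(I)=\dim R-\dim R/I$ holds for any ideal), so there is nothing to add.
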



Theorem \ref{th: EN} can be used to study the complex \eqref{main c} on the affine space.
Suppose that $X=\C^n$, then the coefficients of the forms $\omega_i=\sum_{t=1}^{n} m_{it}dx_t$
define an $(n-k+1)\times n$ matrix $M=(m_{it})$. Let $R=\CO_{\C^n,0}$.

\begin{proposition}
The complexes $\CC(\omega_1,\ldots,\omega_k)$ and $\E(M)$ are isomorphic.
\end{proposition}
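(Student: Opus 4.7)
The plan is to exhibit an explicit degreewise isomorphism $\Phi : \E(M) \to \CC(\omega_1,\ldots,\omega_k)$ of complexes of $R$-modules, where $R = \CO_{\C^n,0}$. With Eagon--Northcott parameters $s = n-k+1$ and $r = n$, both complexes have length $k = r-s+1$, and the binomial identity $\binom{n}{k-t} = \binom{n}{n-k+t}$ equates the ranks of $\E_t$ and $\CC_t$ in every homological degree. So the proposition reduces to writing down a degreewise isomorphism that intertwines the two differentials.

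For the construction, I would fix the volume form $\mathrm{vol} = dx_1 \wedge \cdots \wedge dx_n$, set $\Phi_0(1) = \mathrm{vol}$, and for $1 \le t \le k$ identify $\E_t = R \otimes \wedge^{s+t-1}V \otimes S^{t-1}W$ with $\CC_t = \Omega^{k-t}_{\C^n,0} \otimes S^{t-1}W$ via the Hodge-star-type map
$$
\wedge^{n-k+t} V \longrightarrow \Omega^{k-t}_{\C^n,0}, \qquad e_I \longmapsto \mathrm{sgn}(I,I^c)\, dx_{I^c},
$$
where $I \subset \{1,\ldots,n\}$ is a strictly increasing multi-index of size $n-k+t$, $I^c$ is its complement listed in increasing order, and $\mathrm{sgn}(I,I^c)$ is the shuffle sign taking $(I,I^c)$ to $(1,2,\ldots,n)$. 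Tensoring with the identity on $S^{t-1}W$ yields $\Phi_t$.

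Commutativity with the differentials reduces to two short computations. For $d_1$, expand
$$
\omega_1 \wedge \cdots \wedge \omega_{n-k+1} = \sum_{|T| = n-k+1} \Delta_T(M)\, dx_T;
$$
wedging this with $\Phi_1(e_I) = \mathrm{sgn}(I,I^c) dx_{I^c}$ kills every term except $T = I$, leaving (after the shuffle signs cancel) $\Delta_I(M)\cdot \mathrm{vol} = \Phi_0(d_1^\E(e_I))$. For $d_t$ with $t \ge 2$, writing $\omega_{t'} = \sum_l m_{t',l}\, dx_l$ gives
$$
\Phi_t(e_I) \wedge \omega_{t'} = \mathrm{sgn}(I,I^c) \sum_{l \in I} m_{t',l}\, dx_{I^c} \wedge dx_l,
$$
and each nonzero term is a signed multiple of $dx_{(I \setminus \{l\})^c}$. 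Summing over $t'$ and $l$ should reproduce the Eagon--Northcott differential term by term.

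I expect the main obstacle to be sign bookkeeping. Three signs participate: the shuffle signs from $\Phi_t$ and $\Phi_{t-1}$, the Koszul sign from inserting $dx_l$ into $dx_{I^c}$, and the factor $(-1)^{l-1}$ in the E--N differential (where $l$ is the position of the removed element inside $I$). The identity to verify, by a direct permutation calculation, is
$$
\mathrm{sgn}(I,I^c)\cdot (\text{insertion sign of } l \text{ into } I^c)\cdot \mathrm{sgn}(I\setminus\{l\},\, I^c \cup \{l\}) = (-1)^{l-1},
$$
which expresses the standard way shuffle signs transform under removal of a single element from $I$. Once this identity is in hand, $\Phi$ is a morphism of complexes; being a degreewise isomorphism of free $R$-modules, it gives the claimed isomorphism of complexes.
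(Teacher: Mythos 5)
Your proposal is correct and follows essentially the same route as the paper: identify the chain groups through the volume-form duality $\wedge^{k-t}V^{*}\cong\wedge^{n-k+t}V$ (the Hodge-star-type map $e_I\mapsto\pm\,dx_{I^c}$), observe that the matrix entries of $d_1$ are the maximal minors of $M$ as coefficients of $\omega_1\wedge\ldots\wedge\omega_{n-k+1}$, and match the higher differentials term by term. The only caveat is that your displayed sign identity holds as stated only up to a uniform factor $(-1)^{n+1}$ independent of $I$ and of the removed index (similarly the $d_1$ comparison produces a uniform sign $(-1)^{(n-k+1)(k-1)}$), but since these discrepancies are constant in each homological degree they are absorbed by rescaling each $\Phi_t$ by a sign, so the isomorphism of complexes survives --- the paper suppresses this bookkeeping entirely.
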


\begin{proof}
Let us identify the chain groups first:
$$
\CC_j=\Omega^{k-j}_{\C^n}\otimes S^{j-1}W=R\otimes \wedge^{k-j}V^* \otimes S^{j-1}W\simeq 
R\otimes \wedge^{n-k+j}V \otimes S^{j-1}W=\E_j.
$$
Under this identification, the matrix coefficients of $d_1$ are given by the coefficients of the
$(n-k+1)$-form $\omega_1\wedge\ldots \wedge \omega_{n-k+1}$ which are just the $(n-k+1)\times (n-k+1)$
minors of the matrix $M$.
Furthermore, 
$$
d_j(dx_{i_1}\wedge\ldots \wedge dx_{i_{k-j}}\otimes \varphi(u))=
\sum_{t}dx_{i_1}\wedge\ldots \wedge dx_{i_{k-j}}\wedge \omega_t\otimes \frac{\partial \varphi}{\partial u_t}=
$$
$$
= \sum_{t,l}m_{l,t}dx_{i_1}\wedge\ldots \wedge dx_{i_{k-j}}\wedge dx_{l}\otimes
\frac{\partial \varphi}{\partial u_t}.
$$
\end{proof}

\begin{corollary}
Suppose that $Z(\C^n;\omega_1,\ldots,\omega_{n-k+1})$ is Cohen-Macaulay of dimension $k$.
Then $\CC(\omega_1,\ldots,\omega_{n-k+1})$ is a free resolution of the local ring of $Z(\C^n;\omega_1,\ldots,\omega_{n-k+1})$. 
\end{corollary}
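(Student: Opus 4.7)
The plan is to simply assemble the two statements immediately preceding. By the preceding proposition, $\CC(\omega_1,\ldots,\omega_{n-k+1})$ is isomorphic to the Eagon--Northcott complex $\E(M)$ of the coefficient matrix $M=(m_{it})$, and from \eqref{EN groups} each chain group $\E_j=R\otimes \wedge^{s+j-1}V\otimes S^{j-1}W$ (with $R=\CO_{\C^n,0}$, $s=n-k+1$, $r=n$) is a free $R$-module. Thus $\CC$ is automatically a complex of free $\CO_{\C^n,0}$-modules; what remains is to show that its cohomology is concentrated in degree zero and equals the local ring of $Z(\C^n;\omega_1,\ldots,\omega_{n-k+1})$.

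Next I would match the hypotheses with those of the corollary to Theorem \ref{th: EN}. The ambient ring $R=\CO_{\C^n,0}$ is regular, hence Cohen--Macaulay of dimension $N=n$, and the ideal $J=\langle \Delta_{i_1,\ldots,i_s}(M)\rangle$ cuts out $Z$ with expected codimension $r-s+1=k$, hence expected dimension $N-r+s-1=n-k$. Under the Cohen--Macaulay hypothesis on $Z$ this expected dimension is attained, matching exactly the dimension condition in that corollary. The corollary then yields $H^j(\E,d)=0$ for $j>0$, and Theorem \ref{th: EN} identifies $H^0(\E,d)=R/J$, which is by definition the local ring $\CO_{Z,0}$. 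Carrying these conclusions through the isomorphism $\CC\simeq \E(M)$ gives the desired free resolution.

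There is essentially no obstacle here: the only non-formal content is the standard fact that in a Cohen--Macaulay ambient ring the depth of an ideal whose quotient is Cohen--Macaulay agrees with its codimension, so the depth condition of Theorem \ref{th: EN} is automatically satisfied. Everything else is direct bookkeeping through the identification $\CC\simeq \E(M)$ provided by the preceding proposition.
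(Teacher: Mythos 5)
Your proposal is correct and is exactly the argument the paper intends (the paper leaves it implicit): combine the proposition identifying $\CC(\omega_1,\ldots,\omega_{n-k+1})$ with the Eagon--Northcott complex $\E(M)$, whose terms are free over $R=\CO_{\C^n,0}$, with Theorem \ref{th: EN} and its corollary, using that in the Cohen--Macaulay ring $R$ the grade of the ideal of maximal minors equals its height $k=r-s+1$. Your reading of the hypothesis as ``$Z$ is Cohen--Macaulay of the expected dimension $n-k$'' (the statement's ``dimension $k$'' is evidently a slip for codimension $k$) is the intended one, so no gap remains.
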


\subsection{Forms on complete intersections}

Let $(X,0)=\{f_1=\ldots=f_{N-n}=0\}$ be an isolated complete intersection singularity in $(\C^N,0)$,
and let $\omega_1,\ldots,\omega_{n-k+1}$ be a sequence of 1-forms on $X$.
Let $R=\CO_{X,0}=\CO_{\C^N,0}/(f_1,\ldots,f_{N-n})$, and let $F\simeq \C^{N-n}$.
The following statement is well known, but we present its proof for the reader's convenience.

\begin{lemma}
\label{lem: resolution complete intersection}
The module $\Om^j_{X,0}$ of $j$-forms on $X$, $j\le n$), has the following free resolution over $R$:
\begin{equation}
\label{resolution complete intersection}
\Om^j_{X,0}\simeq 
\left[R\otimes \wedge^{j}\C^N \leftarrow R\otimes \wedge^{j-1}\C^N \otimes F\leftarrow \cdots
\leftarrow R\otimes S^{j}F\right],
\end{equation}
where the differentials are induced by the map $(df_1,\ldots,df_{N-n}):R\otimes F\to R\otimes \C^{N}$
which sends the $i$th basis element of $F$ to $df_i$.
\end{lemma}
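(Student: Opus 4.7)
The plan is to realize the proposed complex as the $j$-th exterior power of the conormal 2-term resolution of $\Omega^1_{X,0}$ in the sense of \eqref{exterior two term}, identify $\Omega^j_{X,0}$ with its $0$-th cohomology, and then establish acyclicity in positive degrees via a downward depth induction based on Auslander-Buchsbaum.

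Because $f_1,\ldots,f_{N-n}$ is a regular sequence in $A := \CO_{\C^N,0}$, the conormal module $\mathcal{I}/\mathcal{I}^2$ (with $\mathcal{I} = (f_1,\ldots,f_{N-n})$) is $R$-free of rank $N-n$ with basis $\{\bar f_i\}$, and the map $\bar f_i \mapsto df_i$ yields the short exact sequence
$$0 \to R \otimes F \to R \otimes \C^N \to \Om^1_{X,0} \to 0. \qquad (\star)$$
From the definition $\Om^j_{X,0} = \Om^j_{\C^N,0}/(f_i\Om^j + df_i\wedge\Om^{j-1})$, a direct check using that exterior powers commute with cokernels yields $\Om^j_{X,0} \cong \wedge^j_R\Om^1_{X,0}$. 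Applying formula \eqref{exterior two term} to the 2-term complex $(\star)$ then identifies the proposed complex as its $j$-th exterior power, so $H^0 = \wedge^j_R\Om^1_{X,0} = \Om^j_{X,0}$.

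For the vanishing of $H^i$ in positive degrees, observe first that on the smooth locus $X\setminus\{0\}$ the module $\Om^1_{X,0}$ is locally free of rank $n$, so $(\star)$ splits locally and its $j$-th exterior power is locally acyclic (a direct verification in the split case); hence each $H^i$ ($i \geq 1$) of the proposed complex $F_0 \leftarrow F_1 \leftarrow \cdots \leftarrow F_j$, where $F_t = R \otimes \wedge^{j-t}\C^N \otimes S^t F$, is supported at $\{0\}$ and therefore has finite length. I then induct downward in $i$: once $H^t = 0$ for all $t > k$, the tail $0 \to F_j \to \cdots \to F_{k+1}$ provides a free resolution of $M_k := \mathrm{im}(d_{k+1}) \subset F_k$ of length $j-k-1$, so by Auslander-Buchsbaum (applied in the Cohen-Macaulay local ring $R$ of dimension $n$) one has $\mathrm{depth}_R(M_k) \geq n - j + k + 1$, and the depth lemma applied to $0 \to M_k \to F_k \to F_k/M_k \to 0$ then gives $\mathrm{depth}(F_k/M_k) \geq n - j + k$. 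Since $H^k$ embeds into $F_k/M_k$ and has finite length, it must vanish whenever $n - j + k \geq 1$; in the stated range $j \leq n$ this holds for every $k \geq 1$, closing the induction.

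I expect the main obstacle to be organizing the bookkeeping so that the range $j \leq n$ of the lemma matches precisely the range in which the Auslander-Buchsbaum depth induction closes. Two supporting ingredients are essential: the isolated-singularity hypothesis, which forces the higher cohomologies to be supported at a point (and hence of finite length); and the Cohen-Macaulayness of the ICIS ring $R$, which powers the Auslander-Buchsbaum bound. The remaining verifications — that exterior powers of split 2-term complexes of free modules are acyclic, and the depth lemma tracking — are formal.
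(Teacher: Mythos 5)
Your proof is correct, and for the crucial acyclicity step it takes a genuinely different route from the paper. Both arguments share the first half: present $\Om^1_{X,0}$ by the two-term free complex $R\otimes F\to R\otimes\C^N$, note $\Om^j_{X,0}\simeq\wedge^j_R\Om^1_{X,0}$, and identify the proposed complex with the $j$-th exterior power of that presentation via \eqref{exterior two term}, so that its $H^0$ is $\Om^j_{X,0}$. For the vanishing of higher cohomology, however, the paper simply invokes Weyman's theorem on resolutions of exterior powers, verified through the grade/codimension condition: the maximal minors of $(df_1,\ldots,df_{N-n})$ cut out the singular locus, which has codimension $n\ge j$. You instead reprove the needed vanishing by hand: localizing at points of $X\setminus\{0\}$ the two-term complex splits, its exterior power is acyclic (the split computation reduces to exact Koszul strands $S^tF\otimes\wedge^{m-t}F$), so the higher cohomology has finite length; then a downward induction using Auslander--Buchsbaum in the Cohen--Macaulay ring $R$ plus the depth lemma shows a finite-length submodule of $F_k/\Imm(d_{k+1})$ must vanish for $k\ge 1$ when $j\le n$ --- essentially a Peskine--Szpiro-type acyclicity argument. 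What each buys: the paper's citation is shorter and its hypothesis (grade of the minor ideal) does not formally require the singularity to be isolated, while your argument is self-contained, but it does lean on the isolated-singularity hypothesis (for finite-length cohomology) and on Cohen--Macaulayness of $R$; both inputs are available here, so the bookkeeping closes exactly in the stated range $j\le n$. Two cosmetic points: your indexing $F_t=R\otimes\wedge^{j-t}\C^N\otimes S^tF$ matches \eqref{exterior two term} rather than the shifted indexing in \eqref{main c}, which is fine; and the injectivity of $R\otimes F\to R\otimes\C^N$ in your sequence $(\star)$, which you only gesture at via the conormal module, is in any case recoverable from your own induction (the case $k=j$) or from genericity of the rank over the reduced ring $R$, and the paper asserts it without proof as well.
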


\begin{proof}
The module of $j$-forms is defined as the $j$-th exterior power  of the module of 1-forms.
The latter has a natural two-term free resolution over $R$:
\begin{equation}
\label{resolution 1 forms}
\Om^1_{X,0}\simeq \left[R\otimes \C^N\xleftarrow{d} R\otimes F\right],
\end{equation}
where the map $d=(df_1,\ldots,df_{N-n}):R\otimes F\to R\otimes \C^N$
sends the $i$th basis element of $F$ to $df_i$.
The maximal minors of $d$ vanish on the set of singular points of $(X,0)$,
which has codimension $n$ in $R$. Therefore by \cite[Theorem 1]{Weyman} for $j\le n$ the free resolution of the module
$\Om^j_{X,0}\simeq \wedge^j(\Om^1_{X,0})$ coincides with the $j$-th exterior power of the complex \eqref{resolution 1 forms},
which by \eqref{exterior two term}  agrees with \eqref{resolution complete intersection}.

\end{proof}

Consider the Eagon-Northcott complex for the collection of 1-forms $df_i$ over $R$:
\begin{equation}
\label{EN for df}
\E(f):=\left[R\otimes \wedge^N\C^N\xleftarrow{ \wedge df_1\wedge\ldots \wedge df_{N-n}} R\otimes \wedge^{n}\C^N \leftarrow R\otimes \wedge^{n-1}\C^N \otimes F\leftarrow \cdots \leftarrow R\otimes S^{n}F\right].
\end{equation}
The top minors of the $(N-n)\times N$ matrix consisting of the coefficients of $df_i$ define the singular set of $X$, which has  dimension $0$ and codimension $n$ in $X$. If $q$ is the maximal degree of nontrivial cohomology of  $\E(f)$, then by \cite[Theorem 1]{EN} one has
$$
n+q=N-(N-n)+1\ \Rightarrow q=1.
$$
Therefore $H^k(\E(f))=0$ for $k\ge 2$. 
We define a map
$$
\widehat{\phi}:\Om^{n}_{\C^N,0}\to R\otimes \wedge^N\C^N,\ \widehat{\phi}(\omega)=\pi(\omega\wedge df_1\wedge\ldots \wedge df_{N-n}).
$$
where $\pi:\CO_{\C^N,0}\to R$ is the natural projection. Since $\widehat{\phi}$ annihilates both the forms divisible by $f_i$ and the forms divisible by $df_i$, it induces a map $\phi:\Om^{n}_{X,0}\to R\otimes \wedge^N\C^N$.

\begin{definition}
We define a pair of modules $\T, \T'$ by the equations:
\begin{equation}
\label{def T}
\T=(R\otimes \wedge^N\C^N)/\Imm(\phi),\quad \T'=\Ker(\phi)
\end{equation}
\end{definition}
 
The modules $\T$ and $\T'$ are supported at the origin and hence are finite-dimensional. It is clear that $\T=H^0(\E(f)), \T'=H^1(\E(f))$, where the complex $\E(f)$ is defined by \eqref{EN for df}. The following lemma follows from the results of \cite[Proposition 1.11]{Greuel}, but we present its proof for completeness. 
 
 
\begin{lemma}
\label{lem tau}
For any finite complex $K=(K_i)$ of free $R$-modules one has $\chi(K\otimes_R \T)=\chi(K\otimes_R \T')$.
\end{lemma}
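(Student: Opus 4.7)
The plan is to reduce the lemma to the numerical identity $\dim_\C\T=\dim_\C\T'$ and then establish this identity by invoking the Grothendieck residue duality for a complete intersection singularity.

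For the reduction, I would first observe that $\T$ and $\T'$ are finite-dimensional $\C$-vector spaces (both being finite-length $R$-modules supported at the origin, as noted before the lemma). Since each $K_i$ is a finitely generated free $R$-module, the tensor product $K_i\otimes_R\T$ is a $\C$-vector space of dimension $\mathrm{rk}_R(K_i)\cdot\dim_\C\T$, so the bounded complex $K\otimes_R\T$ consists of finite-dimensional $\C$-vector spaces. By the Euler--Poincar\'e principle,
\[
\chi(K\otimes_R\T)=\sum_i(-1)^i\dim_\C(K_i\otimes_R\T)=\chi^{\mathrm{rk}}(K)\cdot\dim_\C\T,
\]
where $\chi^{\mathrm{rk}}(K):=\sum_i(-1)^i\mathrm{rk}_R(K_i)$, and the analogous formula holds for $\T'$. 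Thus the lemma is equivalent to the single identity $\dim_\C\T=\dim_\C\T'$ (choose $K=R$ concentrated in one degree to see the implication in one direction).

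To prove $\dim_\C\T=\dim_\C\T'$, I would use the Gorenstein property of $R=\CO_{X,0}$, which holds for any ICIS. In $D^b(R)$ the Eagon--Northcott complex $\E(f)$ is quasi-isomorphic to the two-term complex $C=[\Om^n_{X,0}\xrightarrow{\phi}R]$ having cohomologies $H^0(C)=\T$ and $H^1(C)=\T'$. The desired identity is then a consequence of a nondegenerate Grothendieck residue pairing $\T\otimes_\C\T'\to\C$: given $\overline{g}\in\T=R/\Imm\phi$ lifted to $\widetilde{g}\in R$ and $\omega\in\T'=\Ker\phi\subset\Om^n_{X,0}$, one forms the Grothendieck residue symbol of $\widetilde{g}\cdot\omega$ against $df_1,\ldots,df_{N-n}$. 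One then checks that this is well-defined on classes (using $\omega\in\Ker\phi$ to kill the dependence on the lift $\widetilde{g}$, and the defining relations of $\Om^n_{X,0}$ to kill the dependence on the representative of $\omega$) and that it is nondegenerate (using the local duality theorem on the Gorenstein local ring $R$). Equivalently, one can give a derived-category argument: applying $\mathrm{RHom}_R(-,R)$ to $\E(f)$ produces another bounded complex of finite-rank free $R$-modules, and Grothendieck local duality $\mathrm{RHom}_R(-,R)\simeq D(-)[-n]$ (with $D$ the Matlis dual) on complexes with finite-length cohomology identifies the dual of $\T$ with $\T'$ up to shift.

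The main obstacle is the nondegeneracy of the residue pairing (or equivalently the self-duality of $\E(f)$ up to shift). This relies on the specific form of the Eagon--Northcott complex for the Jacobian matrix of a complete intersection together with the Gorenstein property of $R$; it is essentially the content of Greuel's Proposition 1.11 in \cite{Greuel}, so I would either invoke that result directly or reprove it via the residue calculus sketched above.
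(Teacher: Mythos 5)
Your argument is correct, but it proves the lemma by a genuinely different route than the paper. You reduce the statement, via the Euler--Poincar\'e principle for bounded complexes of finite-dimensional vector spaces, to the single identity $\dim_\C \T=\dim_\C \T'$ (this reduction is sound, since $K_i\otimes_R\T\cong \T^{\oplus\mathrm{rk}K_i}$), and then you obtain that identity from duality: Greuel's Proposition 1.11, or equivalently a nondegenerate residue pairing between $\T=\mathrm{coker}(\phi)$ and $\T'=\Ker(\phi)$. Note that this reverses the paper's logical order: there, $\dim\T=\dim\T'$ is a \emph{corollary} of the lemma (take $K=R$), and the lemma itself is proved without any duality, by computing $\chi(K\otimes_R\E(f))$ in two ways --- the law of conservation of number of Giraldo--G\'omez-Mont applied to a deformation $f\rightsquigarrow\widetilde f$ with smooth fibre (for which $\E(\widetilde f)$ is acyclic) gives $\chi(K\otimes_R\E(f))=0$, and the hyperhomology spectral sequence with second page $K\otimes H^*(\E(f))$ converts this into $\chi(K\otimes\T)-\chi(K\otimes\T')=0$. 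The paper explicitly remarks that the lemma also follows from Greuel's Proposition 1.11, so your citation is legitimate; what its self-contained proof buys is independence from that duality input, and it is in the same spirit (deformation plus conservation of number) as the rest of the paper. Conversely, your reduction makes transparent that the lemma carries no more information than $\dim\T=\dim\T'$. One caveat on your ``equivalently'' derived-category sketch: Gorenstein local duality alone only computes the cohomology of the dual complex $\mathrm{Hom}_R(\E(f),R)$ as Matlis duals of $\T$ and $\T'$ in shifted degrees; to conclude $D(\T)\cong\T'$ you still need the (approximate) self-duality of the two-term complex $\left[\Om^n_{X,0}\xrightarrow{\ \phi\ }R\otimes\wedge^N\C^N\right]$, i.e.\ precisely the nondegeneracy of the residue pairing, which is the content of Greuel's result --- so in that variant the appeal to Greuel (or a genuine proof of nondegeneracy) is not optional.
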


\begin{proof}
Let us compute the Euler characteristic of the complex $K\otimes_R \E(f)$ in two ways. Since the homologies of $\E(f)$ are supported at the origin, the same is true for $K\otimes_R \E(f)$, so the Euler characteristic is finite. 

 First, since all the chain groups are free over $R$, we can apply the results of \cite{GG} to its deformations. If we deform $f_i$ to $\widetilde{f}_i$ so that $\{\widetilde{f_i}=0\}$ becomes smooth, and $\E(\widetilde{f})$ is clearly acyclic. Therefore $K\otimes_R \E(\widetilde{f})$ is acyclic as well, and $\chi(K\otimes_R \E(f))=K\otimes_R \E(\widetilde{f})=0$.
 
 On the other hand, there is a spectral sequence starting at $K\otimes H^*(\E(f))$ and converging to $H^*(K\otimes \E(f))$, so
 $$
 0=\chi(K\otimes \E(f))=\chi(K\otimes H^*(\E(f)))=\chi(K\otimes \T)-\chi(K\otimes \T'). 
 $$
\end{proof}

\begin{corollary}
One has $\dim \T=\dim \T'$.
\end{corollary}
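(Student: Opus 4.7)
The corollary is an immediate consequence of Lemma \ref{lem tau}. My plan is simply to specialize the lemma to the trivial complex.

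Concretely, I would take $K$ to be the one-term complex concentrated in degree $0$, namely $K_0 = R$ and $K_i = 0$ for $i \neq 0$. This is a finite complex of free $R$-modules in the sense of the lemma, so the hypothesis is satisfied. For this choice, $K \otimes_R \T$ is simply $\T$ concentrated in degree $0$, and similarly for $\T'$. Since $\T$ and $\T'$ are finite-dimensional over $\C$ (being supported at the origin, as noted just before Lemma \ref{lem tau}), the Euler characteristics on the two sides of the conclusion of Lemma \ref{lem tau} reduce to $\dim_\C \T$ and $\dim_\C \T'$ respectively. Equating them gives the corollary.

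There is no real obstacle here; the only thing to check is that the trivial complex $K = [R]$ is admissible in Lemma \ref{lem tau}, which is clear from the statement of the lemma. Thus the proof is essentially a single line, and the substantive content of the statement lives in Lemma \ref{lem tau} itself (via the deformation argument of \cite{GG} and the spectral sequence comparison between $\chi(K \otimes_R \E(f))$ and $\chi(K \otimes H^*(\E(f)))$).
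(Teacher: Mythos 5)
Your proof is correct and is exactly the paper's argument: the paper also simply applies Lemma \ref{lem tau} with $K=R$ (the complex concentrated in degree $0$), for which the two Euler characteristics reduce to $\dim_\C \T$ and $\dim_\C \T'$.
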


\begin{proof}
Apply Lemma \ref{lem tau} to $K=R$.
\end{proof}

\begin{proposition}
Let $(X,0)$ be an isolated complete intersection singularity and let  $\{\omega_j^{(i)}\}$ be a collection of 
1-forms on $(X,0)$. Then the GSV-index of this collection equals
$$
\indGSV(\{\omega_j^{(i)}\};X,0)=
\dim\frac{\Omega^n_{X,0}}{\sum_{i}\left((\wedge_j \omega_j^{(i)})\wedge \Omega^{k_i-1}_{X,0}\right)}.
$$ 
\end{proposition}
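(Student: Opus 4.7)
The plan is to unpack the algebraic formula \eqref{def gsv collection} and compare it with $\dim\Om^n_{X,0}/\Phi$ via the map $\phi$ from \eqref{def T}, where I abbreviate $\Phi:=\sum_i(\wedge_j\omega_j^{(i)})\wedge\Om^{k_i-1}_{X,0}\subset\Om^n_{X,0}$ and $R:=\CO_{X,0}$. I would begin with the coordinate calculation identifying the image of $\phi$ restricted to $\Phi$: for $\beta=\sum_{|K|=k_i-1}h_K\,dz_K\in\Om^{k_i-1}_{X,0}$ the coefficient of $dz_1\wedge\cdots\wedge dz_N$ in $\beta\wedge\omega_1^{(i)}\wedge\cdots\wedge\omega_{n-k_i+1}^{(i)}\wedge df_1\wedge\cdots\wedge df_{N-n}$ is $\sum_K\pm h_K\Delta^{(i)}_{K^c}$ modulo $(f_1,\ldots,f_{N-n})$, where $\Delta^{(i)}_{K^c}$ is the maximal minor of $(df_1,\ldots,df_{N-n},\omega_1^{(i)},\ldots,\omega_{n-k_i+1}^{(i)})$ on the rows indexed by $\{1,\ldots,N\}\setminus K$. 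Varying $\beta$ and $i$ exhausts all such minors, so $\phi(\Phi)=J$, where $J\subset R$ is the ideal figuring in \eqref{def gsv collection}; hence $\indGSV(\{\omega_j^{(i)}\};X,0)=\dim R/J$.

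The definitions \eqref{def T} give the four-term exact sequence
\[
0\to\T'\to\Om^n_{X,0}\xrightarrow{\phi}R\to\T\to 0.
\]
Combining this with $0\to\Phi\to\Om^n_{X,0}\to\Om^n_{X,0}/\Phi\to 0$ and $0\to J\to R\to R/J\to 0$, and applying the snake lemma (using $\phi|_\Phi\twoheadrightarrow J$ from the previous step), produces the exact sequence
\[
0\to\T'/(\T'\cap\Phi)\to\Om^n_{X,0}/\Phi\xrightarrow{\bar\phi}R/J\to\T\to 0.
\]
Taking $\C$-dimensions and inserting $\dim\T=\dim\T'$ from the corollary to Lemma~\ref{lem tau}, this collapses to $\dim\Om^n_{X,0}/\Phi=\dim R/J-\dim(\T'\cap\Phi)$. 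The proposition therefore reduces to the vanishing $\T'\cap\Phi=0$.

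The main obstacle is this last vanishing. My plan is to establish it by applying Lemma~\ref{lem tau} to a free $R$-complex $K^\bullet$ that resolves $\Om^n_{X,0}/\Phi$: starting from the resolutions of $\Om^{k_i-1}_{X,0}$ and $\Om^n_{X,0}$ given by Lemma~\ref{lem: resolution complete intersection}, I would lift each wedge map $\psi^{(i)}\colon\beta\mapsto\beta\wedge\omega_1^{(i)}\wedge\cdots\wedge\omega_{n-k_i+1}^{(i)}$ to a chain map, and take $K^\bullet$ to be the total complex of the resulting double complex (augmented, if necessary, to kill higher cohomology coming from relations among the $\psi^{(i)}$). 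Both $\chi(K^\bullet\otimes_R\T)$ and $\chi(K^\bullet\otimes_R\T')$ can then be unpacked: via the Eagon-Northcott acyclicity from the previous subsection and the Cohen-Macaulay structure of the ICIS, these should respectively telescope to $\dim R/J-\dim\T$ and to $\dim\Om^n_{X,0}/\Phi-\dim(\T'\cap\Phi)$, and equality forces $\dim(\T'\cap\Phi)=0$. This extends to collections of 1-forms the strategy used in \cite[Theorem 3.2]{EGS} for a single 1-form; the principal new ingredient is the bookkeeping for the tensor product structure of $\CC=\bigotimes_i\CC^{(i)}$.
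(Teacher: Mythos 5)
Your first two paragraphs are sound and essentially coincide with the paper's own computation: one identifies $\phi(\Phi)$ with the ideal $J$ of maximal minors, so that $\indGSV(\{\omega_j^{(i)}\};X,0)=\dim R/J$, and then compares $R/J$ with $\Om^n_{X,0}/\Phi$ through the four-term sequence $0\to\T'\to\Om^n_{X,0}\to R\otimes\wedge^N\C^N\to\T\to 0$ together with $\dim\T=\dim\T'$. Your bookkeeping is in fact more scrupulous than the paper's: the paper passes from $\dim\bigl(\Imm(\phi)/\phi(\Phi)\bigr)$ directly to $\dim\bigl(\Om^n_{X,0}/\Phi\bigr)-\dim\T'$, which amounts to discarding the term $\dim(\T'\cap\Phi)$ that you retain, i.e.\ to treating $\phi$ as injective on $\Phi$ without comment. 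So your reduction of the Proposition to the vanishing $\T'\cap\Phi=0$ is correct, and that vanishing is exactly the nontrivial content (for $s=1$, $k_1=n$ it is what lies behind Greuel's Lemma~5.3, as the Remark indicates).

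The gap is in your third paragraph: the proposed derivation of $\T'\cap\Phi=0$ from Lemma~\ref{lem tau} cannot work, for a structural reason. For any finite complex $K$ of finite-rank free $R$-modules the terms of $K\otimes_R\T$ and $K\otimes_R\T'$ are direct sums of copies of $\T$ and of $\T'$, so $\chi(K\otimes_R\T)=\bigl(\sum_i(-1)^i\,\mathrm{rk}\,K_i\bigr)\dim\T$ and likewise for $\T'$; once $\dim\T=\dim\T'$ is known, the two Euler characteristics agree for every $K$, and neither can ``telescope'' to $\dim R/J-\dim\T$ or to $\dim\bigl(\Om^n_{X,0}/\Phi\bigr)-\dim(\T'\cap\Phi)$, since each is a multiple of $\dim\T$ determined by the ranks alone. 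In other words, Lemma~\ref{lem tau} is an Euler-characteristic statement and is blind to whether the submodule $\T'\cap\Phi\subset\Om^n_{X,0}$ is zero, so no choice of $K$ (in particular no lift of the maps $\beta\mapsto\beta\wedge\omega_1^{(i)}\wedge\cdots\wedge\omega_{n-k_i+1}^{(i)}$ to the resolutions of Lemma~\ref{lem: resolution complete intersection}) can extract that information; moreover, over the non-regular ring $R$ one cannot in general ``augment'' your total complex to a finite free complex quasi-isomorphic to $\Om^n_{X,0}/\Phi$. What is actually needed to finish is a genuine argument that no nonzero torsion element of $\Om^n_{X,0}$ lies in $\sum_{i}\bigl((\wedge_j\omega_j^{(i)})\wedge\Om^{k_i-1}_{X,0}\bigr)$, equivalently that $\phi$ is injective on this submodule~--- for instance by adapting Greuel's torsion analysis or the Eagon--Northcott/Cohen--Macaulay acyclicity arguments of the following subsection; as written, this key step is missing.
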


\begin{remark}
For $k_1=n$ this follows from the proof of \cite[Lemma 5.3]{Greuel}. 
\end{remark}

\begin{proof}
By definition, 
$$
\indGSV(\{\omega_j^{(i)}\};X,0)=
\dim\frac{R\otimes \wedge^N\C^N}
{\phi\left(\sum_{i}\left((\wedge_j \omega_j^{(i)})\wedge \Omega^{k_i-1}_{X,0}\right)\right)}=
$$
$$
\dim(\T)+ \dim\frac{\Imm(\phi)}
{\phi\left(\sum_{i}\left((\wedge_j \omega_j^{(i)})\wedge \Omega^{k_i-1}_{X,0}\right)\right)}=
$$
$$
\dim(\T)-\dim(\T')+
\dim\frac{\Omega^n_{X,0}}{\sum_{i}\left((\wedge_j \omega_j^{(i)})\wedge \Omega^{k_i-1}_{X,0}\right)}.
$$
Now the statement follows from Lemma \ref{lem tau}.
\end{proof}

\subsection{Homology vanishing}
\label{sec: vanishing}

\begin{theorem}
\label{vanishing one set}
Suppose that, as above, $(X,0)$ is an isolated complete intersection singularity
and $Z(X;\omega_1,\ldots,\omega_{n-k+1})$
is Cohen-Macaulay of dimension $k$. Then the complex $\CC(\omega_1,\ldots,\omega_{n-k+1})$ defined by
\eqref{main c}  has no higher cohomologies.
\end{theorem}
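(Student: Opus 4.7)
The plan is to reduce the claim to the Eagon-Northcott acyclicity theorem (Theorem~\ref{th: EN}) applied to the combined $N\times(N-k+1)$ matrix of 1-forms
$$
M^{\mathrm{comb}}=(df_1,\ldots,df_{N-n},\omega_1,\ldots,\omega_{n-k+1})
$$
over $R=\CO_{X,0}$, by comparing the complex $\CC=\CC(\omega_1,\ldots,\omega_{n-k+1})$ with $\E(M^{\mathrm{comb}})$.

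First I would rewrite $\CC_{>0}$ using the Proposition as $\wedge^{k-1}[\Om^1_{X,0}\leftarrow R\otimes W]$. By Lemma~\ref{lem: resolution complete intersection}, $\Om^1_{X,0}$ admits the two-term free resolution $[R\otimes\C^N\leftarrow R\otimes F]$ over $R$. Lifting the chain map sending $u_j$ to $\omega_j$, one obtains a quasi-isomorphism of two-term complexes of flat $R$-modules
$$
[\Om^1_{X,0}\leftarrow R\otimes W]\simeq[R\otimes\C^N\leftarrow R\otimes(F\oplus W)]
$$
whose right-hand differential is exactly $M^{\mathrm{comb}}$. Since the exterior power of a two-term complex of flat modules is given by formula \eqref{exterior two term} and respects quasi-isomorphism, one concludes (after the standard duality $\wedge^{k-j}\C^N\simeq\wedge^{N-k+j}(\C^N)^*$, trivialized by $dx_1\wedge\cdots\wedge dx_N$) that $\CC_{>0}$ is quasi-isomorphic to the positive-degree part $\E_{\geq 1}(M^{\mathrm{comb}})$ of the Eagon-Northcott complex.

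Next, composing the augmentation $d_1:\CC_1\to\CC_0=\Om^n_{X,0}$ (given by $\beta\mapsto\beta\wedge\omega_1\wedge\cdots\wedge\omega_{n-k+1}$) with the natural map $\phi:\Om^n_{X,0}\to R$ of \eqref{def T}, the basis element $[dx_{i_1}\wedge\cdots\wedge dx_{i_{k-1}}]$ is sent to the coefficient of $dx_1\wedge\cdots\wedge dx_N$ in $dx_{i_1}\wedge\cdots\wedge dx_{i_{k-1}}\wedge df_1\wedge\cdots\wedge df_{N-n}\wedge\omega_1\wedge\cdots\wedge\omega_{n-k+1}$, which is precisely the corresponding $(N-k+1)\times(N-k+1)$-minor of $M^{\mathrm{comb}}$. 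Thus $\phi\circ d_1$ coincides with the EN augmentation $\E_1\to\E_0=R$. The ideal generated by these minors in $R$ cuts out $Z(X;\omega_1,\ldots,\omega_{n-k+1})$: at a smooth point of $X$ the $df_i$ span the conormal, so joint linear dependence of the combined forms is equivalent to dependence of the $\omega_j$ restricted to $T_xX$. The Cohen-Macaulay hypothesis on $Z$, together with Cohen-Macaulayness of $R$ (every ICIS is CM), forces the depth of this ideal in $R$ to equal the expected value $k$, so Theorem~\ref{th: EN} yields $H^t(\E(M^{\mathrm{comb}}))=0$ for $t\geq 1$. Via the quasi-isomorphism this immediately gives $H^t(\CC)=0$ for $t\geq 2$; for $t=1$ the chain of inclusions
$$
\mathrm{im}(d_2)\subseteq\ker(d_1)\subseteq\ker(\phi\circ d_1)=\mathrm{im}(\E_2\to\E_1)=\mathrm{im}(d_2)
$$
(the penultimate equality from EN acyclicity) forces $H^1(\CC)=0$.

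The main obstacle will be the passage of the $(k-1)$-st exterior power through the quasi-isomorphism replacing $\Om^1_{X,0}$ by its free resolution: since $\Om^1_{X,0}$ is not itself flat over $R$, one must work at the level of two-term complexes of flat $R$-modules and invoke formula \eqref{exterior two term} directly, using that $\Om^1_{X,0}$ has projective dimension one. Matching the wedge-product augmentation $d_1$ with the EN augmentation $\E_1\to R$ via $\phi$ is largely bookkeeping but requires careful tracking of signs and of the duality $\wedge^p\C^N\leftrightarrow\wedge^{N-p}(\C^N)^*$.
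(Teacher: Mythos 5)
Your skeleton is the same as the paper's: compare $\CC$ with the Eagon--Northcott complex of the combined matrix $(df_1,\ldots,df_{N-n},\omega_1,\ldots,\omega_{n-k+1})$, get acyclicity from Theorem~\ref{th: EN} (the Cohen--Macaulay hypothesis giving the ideal of maximal minors depth $k$ in $R$), and handle the bottom degree by the chain $\Ker(d_1)\subset\Ker(\phi\circ d_1)\subset\Imm(d_2)$. The genuine gap is the bridge between $\CC_{>0}$ and $\E_{\geq 1}(M^{\mathrm{comb}})$: you assert that the naive exterior power of two-term complexes ``respects quasi-isomorphism'', and this is false as a general principle. For instance, over $R=\C\{x\}$ the free complex $[R\xleftarrow{x}R]$ is quasi-isomorphic to $R/xR$ placed in degree $0$, yet by \eqref{exterior two term} its exterior square is $[0\leftarrow R\xleftarrow{2x}R]$, whose degree-one homology is $R/xR\neq 0$, while $\wedge^2(R/xR)=0$. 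So even for a module of projective dimension one, the exterior power of its resolution need not resolve the exterior power of the module, and the discrepancy occurs exactly in the positive degrees you need; hence the ``pd one'' remark you offer for the obstacle you yourself flag cannot close it.

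What actually makes the comparison work --- and this is how the paper argues --- is the full strength of Lemma~\ref{lem: resolution complete intersection}: by Weyman's theorem, using that the singular locus of $X$ has codimension $n$, the $j$-th exterior power of the resolution \eqref{resolution 1 forms} resolves $\Om^j_{X,0}$ for \emph{every} $j\le n$, not only $j=1$. Granting this, one filters $\wedge^{k-1}\left[R\otimes\C^N\leftarrow R\otimes(F\oplus W)\right]$ by symmetric powers of $F$ --- equivalently, forms the bicomplex \eqref{bicomplex} whose columns are these resolutions tensored with $S^{t-1}W$ --- identifies its total complex (after composing the augmentation with $\phi$) with the Eagon--Northcott complex of the combined matrix, and lets the column-homology spectral sequence, whose first page is $\CC$ concentrated in a single row, transfer the vanishing. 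So the missing ingredient is precisely this bicomplex/spectral-sequence step (or an explicit appeal to Weyman for all $j\le k-1$), not a formal property of $\wedge^{k-1}$. A smaller slip: in your final chain, $\Imm(\E_2\to\E_1)$ lies in the free module $\E_1\cong R\otimes\wedge^{k-1}(\C^N)^*$ while $\Imm(d_2)$ lies in its quotient $\CC_1=\Om^{k-1}_{X,0}$; the equality you want holds only after projecting $\E_1\twoheadrightarrow\Om^{k-1}_{X,0}$ and observing that the $F$-component of $\E_2$ maps into $\sum_i df_i\wedge(\cdot)$, which dies in $\Om^{k-1}_{X,0}$. The paper avoids this by running the whole argument inside the bicomplex, where the degree-one terms literally coincide.
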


\begin{proof}
Let $W\cong \C^{n-k+1}$.
For $j\le n$, let us replace $\Om^j_{X,0}$ appearing in the definition of $\CC(\omega_1,\ldots,\omega_{n-k+1})$ by its free
$R$-resolution \eqref{resolution complete intersection}. We obtain the following bicomplex:

\begin{equation}
\label{bicomplex}
\xymatrix{
R\otimes \wedge^N\C^N  &  & & \\
 \Om^{n}_{X,0} \ar@{-->}[u]^{df_1\cdots df_{N-n}}  & \ar@{-->}[ul]_{\ \ \ \ \ df_1\cdots df_{N-n}\wedge \omega_1\cdots\omega_{n-k+1}}  \ar[l]_{\omega_1\cdots\omega_{n-k+1}\ \ \ \ } R\otimes \wedge^{k-1}\C^N & \ar[l]_{d_W} R\otimes \wedge^{k-2}\C^N\otimes W & \ar[l]_{\ \ \ \ \\ \ \ \ \ \ d_W} \ldots\\
  & \ar[u]_{d_F} R\otimes \wedge^{k-2}\C^N\otimes F & \ar[l]_{d_W} \ar[u]_{d_F} R\otimes \wedge^{k-3}\C^N\otimes F\otimes W &  \ar[l]_{\ \ \ \ \ \ \ \ \ \ \ d_W} \ldots  \\
  & \ar[u]_{d_F} R\otimes \wedge^{k-3}\C^N\otimes S^{2}F &\ar[l]_{d_W} \ar[u]_{d_F} R\otimes \wedge^{k-4}\C^N\otimes S^{2}F\otimes W & \ar[l]_{\ \ \ \ \ \ \ \ \ \ \ \ \ \ \ \ \ d_W} \ldots\\
  & \ar[u]_{d_F}\ldots & \ar[u]_{d_F}\ldots & \\
}
\end{equation}
The horizontal differentials $d_W$ are induced by the map 
$$
(\omega_1,\ldots,\omega_{n-k+1}):R\otimes W\to R\otimes\C^N
$$
(which sends the $i$th basis element of $W$ to $\omega_i$),
and the vertical differentials $d_F$ are induced by the map
$$
(df_1,\ldots,df_{N-n}):R\otimes F\to R\otimes \C^{N}
$$
(which sends the $i$th basis element of $F$ to $df_i$).
 
If one replaces the differential $d_1$ in the complex $\CC$ by its composition with $\phi$:
$$
\wedge df_1\cdots df_{N-n}\wedge \omega_1\cdots\omega_{n-k+1}:R\otimes \wedge^{k-1}\C^N \to R\otimes
\wedge^n\C^n, 
$$
one obtains just the EN complex associated to the matrix 
$$
M=(df_1,\ldots, df_{N-n},\omega_1,\ldots,\omega_{n-k+1}):R\otimes(F\oplus W)\to R\otimes\C^N.
$$ 
Indeed, $S^{j}(F\oplus W)=\oplus_{i=0}^{j}S^{i}F\otimes S^{j-i}W$, and the differentials agree. Let us denote the total complex of the resulting bicomplex by $\wCC(\omega_1\cdots\omega_{n-k+1})$.

 By assumption, $\omega_1,\ldots,\omega_{n-k+1}$ are linearly dependent on a codimension $k$ subvariety
 $Z(X;\omega_1,\ldots,\omega_{n-k+1})$  in $X$.  This subvariety is cut out by $(N-k+1)\times (N-k+1)$-minors
 of the $(N-k+1)\times N$ matrix
$M$, so by Theorem \ref{th: EN} the  complex $\wCC(\omega_1\cdots\omega_{n-k+1})$ does not have higher homologies. Now 
$$
\Ker(d_1)=\Ker(\omega_1\wedge \cdots\wedge \omega_k)\subset 
\Ker( df_1\wedge\cdots\wedge df_{N-n}\wedge \omega_1\wedge \cdots\wedge \omega_k)=
\Ker(\phi\circ d_1)\subset \Imm(d_2),
$$
so the original bicomplex has no higher cohomologies as well. 

On the other hand, consider the associated spectral sequence. By Lemma
\ref{lem: resolution complete intersection} the homologies in columns agree with $\Om^k_{X,0}$
and are concentrated in the top row. Therefore the spectral sequence collapses and $\CC(\omega_1\cdots\omega_{n-k+1})$
has no higher cohomologies.
\end{proof}

Consider now a collection of forms $\{\omega_j^{(i)}\}$ on a complete intersection $(X,0)$.
Let $\wCC^{(i)}=\wCC(\omega_1^{(i)},\ldots,\omega_{n-k_i+1}^{(i)})$
where the complex $\wCC$ was defined in the proof of Theorem \ref{vanishing one set}.
Let $$\wCC(\{\omega_j^{(i)}\})=\wCC^{(1)}\otimes_{R}\wCC^{(2)}\cdots\otimes_{R}\wCC^{(s)}.$$

\begin{theorem}
\label{th:vanishing}
Suppose that the collection of forms $\{\omega_j^{(i)}\}$ on a complete intersection $(X,0)$ has
an isolated singular point at the origin. 
Then the homologies $H^t(\wCC(\{\omega_j^{(i)}\})$ vanish for 
$t > 0$.
\end{theorem}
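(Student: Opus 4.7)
The plan is to prove by induction on $i \in \{1, \dots, s\}$ that $\wCC^{\le i} := \wCC^{(1)} \otimes_R \cdots \otimes_R \wCC^{(i)}$ is a free $R$-resolution of $R/J_i$ and that $R/J_i$ is Cohen--Macaulay of codimension $k_1 + \cdots + k_i$. Here $R = \CO_{X,0}$, $J_i = I_1 + \cdots + I_i$, and $I_j$ is the ideal of maximal minors of the $N \times (N-k_j+1)$ matrix $M_j = (df_1, \dots, df_{N-n}, \omega^{(j)}_1, \dots, \omega^{(j)}_{n-k_j+1})$. The case $i = s$ gives the theorem, since $\wCC(\{\omega_j^{(i)}\}) = \wCC^{\le s}$.

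First I would establish the proper-intersection estimates. Writing $Z_i = V(I_i) \subset X$, the rank-drop interpretation of $I_i$ gives $\operatorname{codim}(Z_i) \le k_i$, while the isolated singular point hypothesis forces $\bigcap_i Z_i = \{0\}$, of codimension $n$ in $X$. Since $R$ is Cohen--Macaulay of dimension $n$ (as an ICIS) and codimensions are subadditive under intersection,
$$n \;=\; \operatorname{codim}\Bigl(\bigcap_{i=1}^{s} Z_i\Bigr) \;\le\; \sum_{i=1}^{s} \operatorname{codim}(Z_i) \;\le\; \sum_{i=1}^{s} k_i \;=\; n,$$
so $\operatorname{codim}(Z_i) = k_i$ for every $i$; applying the same squeeze to any splitting of $\{1,\dots,s\}$ yields $\operatorname{codim}(J_i) = k_1 + \cdots + k_i$ at every stage of the induction.

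The base case $i = 1$ reduces to Theorem \ref{vanishing one set}: as established there, $\wCC^{(1)}$ is the Eagon--Northcott complex of $M_1$, and the depth of $I_1$ in the CM ring $R$ equals the codimension $k_1$, so Theorem \ref{th: EN} applies. For the inductive step, assume $\wCC^{\le i-1}$ is a free resolution of the CM module $R/J_{i-1}$. Because $\wCC^{(i)}$ consists of free (hence flat) $R$-modules, the bicomplex spectral sequence for $\wCC^{\le i-1} \otimes_R \wCC^{(i)}$ collapses and yields a quasi-isomorphism $\wCC^{\le i} \simeq (R/J_{i-1}) \otimes_R \wCC^{(i)}$. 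The right-hand side is the Eagon--Northcott complex of $M_i$ viewed over the CM ring $R/J_{i-1}$; its ideal of maximal minors $(I_i + J_{i-1})/J_{i-1}$ has codimension $k_i$ (hence depth $k_i$, since $R/J_{i-1}$ is CM) by the subadditivity step. Theorem \ref{th: EN} then forces this complex to have no higher cohomology, with $H^0 = R/J_i$. Consequently $\wCC^{\le i}$ is a free $R$-resolution of $R/J_i$ of length $k_1 + \cdots + k_i$, and the Auslander--Buchsbaum formula forces $R/J_i$ to be CM of that codimension, closing the induction.

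The main obstacle is really the bookkeeping required to transmit the Cohen--Macaulay property through the induction, so that the codimension-equals-depth identity remains available for re-invoking Theorem \ref{th: EN} at the next stage; one also must verify that base-change of an Eagon--Northcott complex is again an Eagon--Northcott complex of the reduced matrix, which is immediate from the functoriality of its construction. The subadditivity computation of the second paragraph supplies exactly the depth estimates needed, so no further intersection-theoretic machinery is invoked.
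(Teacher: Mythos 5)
Your argument follows the same skeleton as the paper's proof: induction on $i$; the observation that tensoring the quasi-isomorphism between $\wCC^{(1)}\otimes_R\cdots\otimes_R\wCC^{(i-1)}$ and $R/J_{i-1}$ with the bounded complex of free modules $\wCC^{(i)}$ yields $(R/J_{i-1})\otimes_R\wCC^{(i)}$; the identification of the latter with the Eagon--Northcott complex of the reduced matrix over the quotient ring; and Theorem~\ref{th: EN} applied there with depth equal to the expected codimension $k_i$ (the base case being Theorem~\ref{vanishing one set}). The one genuine difference is how Cohen--Macaulayness is transported through the induction: the paper asserts, citing \cite{Eis}, that the loci $Z(X;\omega_1^{(i)},\ldots,\omega_{n-k_i+1}^{(i)})$ and their partial intersections are Cohen--Macaulay because they are determinantal of the expected codimension, whereas you deduce that $R/J_i$ is Cohen--Macaulay from the length-$(k_1+\cdots+k_i)$ free resolution you have just built, via Auslander--Buchsbaum. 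That variant is sound and somewhat more self-contained, since it avoids invoking generic perfection of determinantal ideals; and your freeness claim is legitimate --- the terms of each $\wCC^{(i)}$ are free $R$-modules, so their tensor products over $R$ are too, notwithstanding the paper's parenthetical ``not free in general''.

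One justification does need repair: ``codimensions are subadditive under intersection'' is not a consequence of $X$ being Cohen--Macaulay. For example, on the hypersurface $X=\{xy=zw\}\subset\C^4$ the planes $\{x=z=0\}$ and $\{y=w=0\}$ each have codimension $1$ in $X$ but meet only at the vertex, which has codimension $3$. The inequality you need holds here for a different reason: each $Z_i$ is cut out by the maximal minors of an $N\times(N-k_i+1)$ matrix, so the Eagon--Northcott height bound, applied in the local ring of any irreducible germ $W$ through the origin (analytic local domains being catenary and equidimensional), shows that every component of $W\cap Z_i$ through $0$ has codimension at most $k_i$ in $W$; iterating this along the chain $J_1\subset J_2\subset\cdots\subset J_s$ and using that $V(J_s)=\{0\}$ gives exactly the squeeze forcing $\dim R/J_i=n-(k_1+\cdots+k_i)$ at every stage. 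This is also what the paper's own ``therefore'' implicitly uses, so with that repair your proof is complete and matches the paper's in all essential steps.
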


\begin{proof}
Suppose that $(X,0)$ is defined in $(\C^N,0)$ by the equations $f_1=\ldots=f_{N-n}=0$.
As above, consider the locus $Z(X;\omega_1^{(i)}, \ldots, \omega_{n-k_i+1}^{(i)})$ where the forms
$\omega_1^{(i)},\ldots,\omega_{n-k_i+1}^{(i)}$ are linearly dependent on $X$ or, equivalently,
the forms $\omega_1^{(i)},\ldots,\omega_{n-k_i+1}^{(i)},df_1,\ldots,df_{N-n}$ are linearly dependent
in $(\C^N,0)$.
By construction, $Z(X;\omega_1^{(i)}, \ldots, \omega_{n-k_i+1}^{(i)})$ is the intersection of $(X,0)$
with the  determinantal variety defined by the vanishing of maximal minors of an $N\times (N-k_i+1)$ matrix. 
Therefore the codimension of $Z(X;\omega_1^{(i)}, \ldots, \omega_{n-k_i+1}^{(i)})$ in $(X,0)$ is 
less than or equal to $k_i$. On the other hand, by assumption the intersection of
the loci $Z(X;\omega_1^{(i)}, \ldots, \omega_{n-k_i+1}^{(i)})$ for all $i$ is zero-dimensional,
so it has codimension $n=\sum_{i=1}^{s}k_i$. Therefore for all $i$ 
the subvarieties $Z(X;\omega_1^{(i)}, \ldots, \omega_{n-k_i+1}^{(i)})$ have pure dimensions $n-k_i$,
which are equal to their expected dimensions. Therefore all
$Z(X;\omega_1^{(i)}, \ldots, \omega_{n-k_i+1}^{(i)})$ are Cohen-Macaulay \cite{Eis}.
Furthermore, the intersections of $Z(X;\omega_1^{(i)}, \ldots, \omega_{n-k_i+1}^{(i)})$ for various $i$
are all Cohen-Macaulay of correct dimension.

Let us prove by induction in $i$ that the product
$\wCC^{(1)}\otimes_{\CO_{X,0}}\wCC^{(2)}\otimes\cdots\otimes_{\CO_{X,0}}\wCC^{(i)}$ has no higher homologies.
For $i=1$ this follows from the proof of Theorem \ref{vanishing one set}. Assume that this is true for some $1\le i\le s$.
Then $\wCC^{(1)}\otimes_{\CO_{X,0}}\wCC^{(2)}\otimes\cdots\otimes_{\CO_{X,0}}\wCC^{(i)}$ is a
(not free in general) resolution of the structure sheaf of the Cohen-Macaulay scheme
$$
Z=Z(X;\omega_1^{(1)},\ldots,\omega_{n-k_1+1}^{(1)})\cap\ldots\cap Z(X;\omega_1^{(i)},\ldots,\omega_{n-k_i+1}^{(i)}).
$$
Now
\begin{equation}
\label{eq: tensor}
\wCC^{(1)}\otimes_{\CO_{X,0}}\wCC^{(2)}\otimes\cdots\otimes_{\CO_{X,0}}\wCC^{(i)} \otimes_{\CO_{X,0}}\wCC^{(i+1)}
\simeq \CO_{Z,0}\otimes_{\CO_{X,0}}\wCC^{(i+1)}.
\end{equation}
Similarly to the proof of Theorem \ref{vanishing one set}, we can replace \eqref{eq: tensor} by
the complex \eqref{bicomplex} with $R$ replaced by $\CO_{Z,0}$. Since $Z$ and 
$Z\cap Z(X;\omega_1^{(i+1)}, \ldots, \omega_{n-k_{i+1}+1}^{(i+1)})$ are both Cohen-Macaulay and
$Z\cap Z(X;\omega_1^{(i+1)}, \ldots, \omega_{n-k_{i+1}+1}^{(i+1)})$ has codimension $k_{i+1}$ in $Z$,
by Theorem \ref{th: EN} the bicomplex has no higher homologies. Therefore the complex \eqref{eq: tensor}
has no higher homologies as well. 
\end{proof}

\begin{corollary}
\label{cor: smooth}
Suppose that $(X,0)$ is an isolated complete intersection singularity, and the collection
$\{\omega_j^{(i)}\}$ of holomorphic 1-forms has an isolated singular point at the origin. 
Then the homological and the GSV indices for this collection of 1-forms agree:
$$
\indhom(\{\omega_j^{(i)}\};X,0)=\indGSV(\{\omega_j^{(i)}\};X,0).
$$
\end{corollary}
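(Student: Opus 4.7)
The plan is to equate $\indhom=\chi(\CC)$ with $\chi(\wCC)$, and separately $\chi(\wCC)$ with $\indGSV$. For the second equality, Theorem~\ref{th:vanishing} ensures that $\wCC=\bigotimes_{i}\wCC^{(i)}$ has cohomology concentrated in degree zero. The proof of Theorem~\ref{vanishing one set} identifies each $\wCC^{(i)}$ with the Eagon-Northcott complex of the combined $(N-k_i+1)\times N$ matrix $M^{(i)}=(df_1,\ldots,df_{N-n},\omega_1^{(i)},\ldots,\omega_{n-k_i+1}^{(i)})$, and the Cohen-Macaulay verification in the proof of Theorem~\ref{th:vanishing} ensures the hypothesis of Theorem~\ref{th: EN}. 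Hence each $\wCC^{(i)}$ resolves $R/I^{(i)}$, where $I^{(i)}$ is the ideal of $(N-k_i+1)$-minors of $M^{(i)}$, so that
\[
H^0(\wCC)=\bigotimes_R R/I^{(i)}=R/(I^{(1)}+\cdots+I^{(s)})=R/I_{X,\{\omega_j^{(i)}\}},
\]
and \eqref{def gsv collection} gives $\chi(\wCC)=\dim R/I_{X,\{\omega_j^{(i)}\}}=\indGSV$.

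For the identity $\chi(\CC)=\chi(\wCC)$, I would argue iteratively via Lemma~\ref{lem tau}. For each $i$, replacing every $\Omega^j_{X,0}$ appearing in $\CC^{(i)}$ by its free $R$-resolution from Lemma~\ref{lem: resolution complete intersection} produces a quasi-isomorphic complex $\tilde{\CC}^{(i)}$ of free $R$-modules. The bicomplex~\eqref{bicomplex}, augmented at its top-left by $R\otimes\wedge^N\C^N$ along $\phi$, exhibits $\wCC^{(i)}$ as obtained from $\tilde{\CC}^{(i)}$ by adjoining the $\phi$-strand; concretely, there is a natural chain map $\tilde{\CC}^{(i)}\to\wCC^{(i)}$ whose mapping cone is quasi-isomorphic to (a free resolution of) the two-term complex $[\Omega^n_{X,0}\xrightarrow{\phi}R\otimes\wedge^N\C^N]$, with cohomologies $\T'$ and $\T$. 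Tensoring with $K:=\bigotimes_{i'\neq i}\wCC^{(i')}$, a finite complex of free $R$-modules, Lemma~\ref{lem tau} yields
\[
\chi(\wCC^{(i)}\otimes_R K)-\chi(\CC^{(i)}\otimes_R K)=\chi(K\otimes_R\T)-\chi(K\otimes_R\T')=0.
\]
Iterating this substitution, at each stage replacing the already-processed factors $\CC^{(i')}$ by $\tilde{\CC}^{(i')}$ so that the auxiliary factor stays a complex of free $R$-modules, gives $\chi(\CC)=\chi(\wCC)$.

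Combining the two computations yields $\indhom=\chi(\CC)=\chi(\wCC)=\indGSV$. The main obstacle is making precise the chain map $\tilde{\CC}^{(i)}\to\wCC^{(i)}$ and identifying its mapping cone with the two-term $\phi$-complex via the bicomplex structure of~\eqref{bicomplex}; the finite-dimensionality of all intermediate cohomologies required for the Euler characteristics to be defined is automatic, since the support of every tensor product entering the iteration is contained in the zero-dimensional set $\bigcap_{i'}Z(X;\omega_1^{(i')},\ldots,\omega_{n-k_{i'}+1}^{(i')})=\{0\}$ guaranteed by the isolated-singular-point hypothesis.
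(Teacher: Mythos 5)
Your overall strategy is the same as the paper's: you compute $\chi(\wCC)=\indGSV$ exactly as the paper does (Theorem \ref{th:vanishing} together with the identification of $H^0(\wCC^{(i)})$ with $\CO_{X,0}/I^{(i)}$ and formula \eqref{def gsv collection}), and you reduce $\chi(\CC)=\chi(\wCC)$ to Lemma \ref{lem tau}, i.e.\ to the interchangeability of $\T$ and $\T'$ after tensoring with a finite free complex. The difference is only in the bookkeeping: the paper introduces the intermediate complex $\wCC'^{(i)}$ (the complex \eqref{bicomplex} with $\Imm(\phi)$ in degree $0$), notes $\CC^{(i)}/\T'\simeq\wCC'^{(i)}$ and $\wCC^{(i)}/\wCC'^{(i)}\simeq\T$, and expands both $\chi(\CC)$ and $\chi(\wCC)$ as sums over subsets $A\subset\{1,\dots,s\}$, trading $\T'$ for $\T$ term by term; you swap one factor at a time through a mapping cone. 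Your cone identification is correct, and is easiest to make precise by \emph{not} resolving the degree-zero entry: take the total complex of \eqref{bicomplex} with $\Om^n_{X,0}$ left at the corner (it is quasi-isomorphic to $\CC^{(i)}$ by the column spectral sequence) and map it to $\wCC^{(i)}$ by the identity in positive degrees and by $\phi$ in degree $0$; a direct check shows the cone has homology exactly $\T$ in degree $0$ and $\T'$ in degree $1$, so the hyperhomology spectral sequence over the free factor $K$ gives your displayed identity.

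The step you need to repair is the base of your iteration. By definition $\indhom$ is the Euler characteristic of the naive tensor product $\bigotimes_i\CC^{(i)}$, all of whose factors are non-free; your scheme needs $\chi\bigl(\CC^{(1)}\otimes\CC^{(2)}\otimes\cdots\bigr)=\chi\bigl(\CC^{(1)}\otimes\tilde\CC^{(2)}\otimes\cdots\bigr)$ before the first swap, and the quasi-isomorphism $\tilde\CC^{(i')}\to\CC^{(i')}$ is \emph{not} automatically preserved after tensoring with the remaining non-free factors $\CC^{(i'')}$: a quasi-isomorphism survives tensoring with a bounded complex of free modules, not with an arbitrary complex, and in general the naive and derived tensor products have different Euler characteristics. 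So either arrange matters so that a factor is only ever modified when all the other factors are already free (which forces a separate justification of the very first replacement), or follow the paper's organization, which works with the subquotients $\T'$, $\wCC'^{(i)}$, $\T$ of the factors themselves and never passes through $\tilde\CC^{(i')}$. (The paper's own expansion also suppresses the Tor-corrections coming from tensoring non-flat complexes, so your sketch is at a comparable level of rigour; but as literally written your induction does not start.)
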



\begin{proof}
By Theorem \ref{th:vanishing}, the complex $\wCC^{(i)}$ does not have higher homologies.
Its zeroth homology equals
$$
\CO_{X,0}\otimes \wedge^N\C^N/\left(df_1\cdots df_{N-n}\wedge \omega^{(i)}_1\cdots\omega^{(i)}_{n-k_i+1}\wedge
\CO_{X,0}\otimes \wedge^{k_i-1}\C^N\right)
\simeq \CO_{X,0}/I^{(i)},
$$
where the ideal $I^{(i)}$ is generated by the $(N-k_i+1)\times (N-k_i+1)$-minors of the
$N\times (N-k_i+1)$ matrix of coefficients 
of the forms $df_1$, \dots $df_{N-n}$, $\omega^{(i)}_1$, \dots, $\omega^{(i)}_{n-k_i+1}$.
Therefore the zeroth homology of the complex $\wCC\left(\{\omega_j^{(i)}\}\right)$ is equal to
$\CO_{X,0}/\sum_i I^{(i)}$.
On the other hand, by Equation \eqref{def gsv collection} the GSV-index of the collection
$\{\omega_j^{(i)}\}$ equals the dimension of the same space, so
$$
\indGSV(\{\omega_j^{(i)}\};X,0)=\chi\left(\wCC\left(\{\omega_j^{(i)}\}\right)\right).
$$
Therefore, it is sufficient to prove that the Euler characteristics of the complexes $\wCC$ and $\CC$ agree.

Indeed, let $\wCC'$ denote the complex \eqref{bicomplex} where $R\otimes \wedge^N\C^N$ is replaced by $\Imm(\phi)$.
Then by Lemma \ref{lem tau} we have:
$$
\CC^{(i)}/\T'\simeq \wCC'^{(i)},\ \wCC^{(i)}/\wCC'^{(i)}\simeq \T,
$$
so 
$$
\chi(\CC)=\sum_{A\subset \{1,\ldots,s\}}\chi\left(\T'^{\otimes |A|}\otimes \bigotimes_{i\notin A}\wCC'^{(i)}\right)=
$$
$$
\sum_{A\subset \{1,\ldots,s\}}\chi\left(\T^{\otimes |A|}\otimes \bigotimes_{i\notin A}\wCC'^{(i)}\right)=\chi(\wCC).
$$
Note that since $\T$ and $\T'$ are supported at the origin, the cohomology of all complexes in the sum are also supported at the origin, and
hence are finite-dimensional. Therefore all the Euler characteristics are finite.
\end{proof}

\section{Invariants of isolated singularities.}
Let $(X,0)\subset (\C^N, 0)$ be a germ of a complex analytic variety of pure dimension $n$
with an isolated singular point at the origin, let $k_i$, $i=1,\ldots, s$, be positive
integers such that $\sum_i k_i=n$, and let $\{\omega_j^{(i)}\}$, $i=1,\ldots, s$, $j=1,\ldots,n-k_i+1$,
be a collection of holomorphic 1-forms on $(X,0)$ without singular points on a punctured neighbourhood of the origin
in $X$. Since both the homological index and the Chern obstruction satisfy the law of conservation of number
and coincide with each other (and with the usual index) on a smooth manifold,
one has the following statement.

\begin{proposition}
 The difference $\indhom(\{\omega_j^{(i)}\}; X,0)-\Ch(\{\omega_j^{(i)}\}; X,0)$ between
 the homological index and the Chern obstruction of a collection of 1-forms on an isolated
 $n$-dimensional singularity does not depend on the collection of 1-forms $\{\omega_j^{(i)}\}$.
\end{proposition}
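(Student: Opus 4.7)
The plan is to exploit the law of conservation of number for both the homological index and the Chern obstruction, together with the fact that they agree (with the usual index) on the smooth part of $X$. Write $D(\omega;X,0):=\indhom(\omega;X,0)-\Ch(\omega;X,0)$ for a collection $\omega=\{\omega_j^{(i)}\}$ with an isolated special point at the origin; the goal is to show $D(\omega;X,0)=D(\widetilde\omega;X,0)$ for any two such collections $\omega,\widetilde\omega$.

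First I would record two facts. Because $\indhom$ and $\Ch$ individually satisfy the law of conservation of number (the former by Proposition~\ref{prop:law_of_cons}, the latter by the construction of \cite{EG-collections}), so does the difference $D$. Moreover, for any special point $x$ lying in the smooth locus $X_{\rm reg}$, both $\indhom(\cdot;X,x)$ and $\Ch(\cdot;X,x)$ coincide with the usual index of a collection of 1-forms on the complex manifold $X_{\rm reg}$ near $x$, so $D(\cdot;X,x)=0$ at every smooth point of $X$.

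Next, I would connect $\omega$ and $\widetilde\omega$ by a one-parameter holomorphic family, for concreteness the straight-line deformation $\omega_t=(1-t)\omega+t\widetilde\omega$ with $t\in[0,1]$. Fix a Milnor ball $B_\varepsilon$ around $0$ small enough that the only special point of $\omega$ and of $\widetilde\omega$ in $X\cap B_\varepsilon$ is the origin, and consider
$$
\Sigma(t):=\sum_{x\in X\cap B_\varepsilon}D(\omega_t;X,x),
$$
the sum ranging over the (presumed isolated) special points of $\omega_t$. By conservation of number applied to $D$, $\Sigma(t)$ is locally constant in $t$ as long as it is defined and no special point escapes through $\partial B_\varepsilon$; and because $0$ is the only singular point of $X$, every special point of $\omega_t$ different from $0$ lies in $X_{\rm reg}$ and contributes $0$ to $\Sigma(t)$. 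Thus $\Sigma(t)$ equals $D(\omega_t;X,0)$ when $0$ is special for $\omega_t$, and $0$ otherwise; at $t=0,1$ it recovers $D(\omega;X,0)$ and $D(\widetilde\omega;X,0)$ respectively.

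The step requiring care is verifying that the family $\omega_t$ -- possibly after a small generic perturbation -- has only isolated special points on $X\cap B_\varepsilon$ for all but finitely many $t$, and that no special point crosses $\partial B_\varepsilon$. This is a standard genericity argument: the locus of ``bad'' parameters forms a proper closed analytic subset of the parameter space, so it can be avoided by a slight holomorphic perturbation of the path, and $\varepsilon$ can be chosen so that special points remain strictly inside $B_\varepsilon$ throughout the deformation. Once this is arranged, conservation of number shows that $\Sigma$ is constant on $[0,1]$, hence $D(\omega;X,0)=D(\widetilde\omega;X,0)$, and the difference depends only on the germ $(X,0)$.
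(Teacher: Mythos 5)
Your argument is correct and is essentially the paper's own: the paper derives the proposition from exactly the two facts you isolate, namely that $\indhom$ and $\Ch$ each satisfy the law of conservation of number (Proposition~\ref{prop:law_of_cons} and \cite{EG-collections}) and that both coincide with the usual index at points of the smooth part, so the difference is unchanged under deformation since all singular points split off into $X_{\rm reg}$ contribute zero. Your write-up merely makes explicit the connecting family and the standard genericity/boundary bookkeeping that the paper leaves implicit, which is fine (note also that a special point crossing the boundary sphere is harmless, since it lies in $X_{\rm reg}$ and contributes zero to your sum $\Sigma(t)$).
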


Thus this difference is an invariant of the singularity $(X,0)$.
For a generic collection $\{\omega_j^{(i)}\}$ (say, for the collection of differentials of
a generic collection of linear functions on $\C^N$), the Chern obstruction $\Ch(\{\omega_j^{(i)}\}; X,0)$
is equal to zero: see~\cite[Proposition 1.1]{EG-collections}. Therefore this difference is equal to the
homological index of a generic collection of 1-forms on $(X,0)$.
If $(X,0)$ is an ICIS and $s=1$ (and therefore $k_1=k$), this index is equal to $\mu(X)+\mu'(X)$, where $\mu(X)$ is the Milnor number
of the ICIS $(X,0)$ (the rank of the middle homology group of its smoothing) and $\mu'(X)$ is
the Milnor number of its generic hyperplane section. If $(X,0)$ is an isolated hypersurface singularity,
up to a constant factor (half of the volume of the $2n$-dimensional sphere) it is equal to the limit
of the integral over the Milnor fibre of the Gauss curvature of the fibre: \cite{Langevin}.
Thus one can say that (up to
a constant factor) it is equal to ``the vanishing curvature of the singularity''.
One can conjecture that the introduced invariants in a similar way are related with vanishing
integrals of the forms defined in terms of the curvature tensor and giving the corresponding
Chern characteristic numbers.

\end{document}